\newtheorem{theorem}{Theorem}[section]
\newtheorem{lemma}{Lemma}[section]
\newtheorem{cor}{Corollary}[section]
\newcommand{\G}{{\mathcal G}}
\newcommand{\mH}{{\mathcal H}}
\newcommand{\Z}{\mathbb{Z}}
\newcommand{\Q}{\mathbb{Q}}
\title[Graphs as difference graphs of $S$-units]{Representation of finite graphs as difference graphs of $S$-units, I}
\author{K. Gy\H{o}ry}
\address{Institute of Mathematics, University of Debrecen \newline\indent P. O. Box 12, H-4010 Debrecen, Hungary}
\email{gyory@science.unideb.hu}
\author[L. Hajdu]{L. Hajdu$^*$}
\address{Institute of Mathematics, University of Debrecen \newline\indent P. O. Box 12, H-4010 Debrecen, Hungary}
\email{hajdul@science.unideb.hu}
\author{R. Tijdeman}
\address{Mathematical Institute, Leiden University\newline
\indent Postbus 9512, 2300 RA Leiden, The Netherlands}
\email{tijdeman@math.leidenuniv.nl}
\thanks{\noindent $^*$Corresponding author, e-mail: hajdul@science.unideb.hu, tel.:+36-52-512900/22800, fax: +36-52-512728.
\hfill\break
Research was supported in part by the OTKA grants K100339, NK101680. The publication was supported by the T\'AMOP-4.2.2.C-11/1/KONV-2012-0001 project. The project has been supported by the European Union, co-financed by the European Social Fund.}
\subjclass[2010]{05C25, 05C62, 11D61}
\keywords{Arithmetic graphs, cubical graphs, representability, $S$-unit equations}
\begin{document}

\maketitle

\rightline{\sl To Professor I. Z. Ruzsa on his 60th birthday}

\begin{abstract} Let $S$ be a finite non-empty set of primes, $\Z_S$ the ring of those rationals whose denominators are not divisible by primes outside $S$, and $\Z_S^*$ the multiplicative group of invertible elements ($S$-units) in $\Z_S$. For a non-empty subset $A$ of $\Z_S$, denote by $\G_S(A)$ the graph with vertex set $A$ and with an edge between $a$ and $b$ if and only if $a-b\in\Z_S^*$. This type of graphs has been studied by many people.

In the present paper we deal with the representability of finite (simple) graphs $G$ as $\G_S(A)$. If $A'=uA+a$ for some $u\in \Z_S^*$ and $a\in \Z_S$, then $A$ and $A'$ are called $S$-equivalent, since $\G_S(A)$ and $\G_S(A')$ are isomorphic. We say that a finite graph $G$ is {\sl representable / infinitely representable} with $S$ if $G$ is isomorphic to $\G_S(A)$ for some $A$ / for infinitely many non-$S$-equivalent $A$.

We prove among other things that for any finite graph $G$ there exist infinitely many finite sets $S$ of primes such that $G$ can be represented with $S$. We deal with the infinite representability of finite graphs, in particular cycles and complete bipartite graphs. Further, we consider the triangles in $G$ for a deeper analysis. Finally, we prove that $G$ is representable with every $S$ if and only if $G$ is cubical.

Besides combinatorial and numbertheoretical arguments, some deep Diophantine results concerning $S$-unit equations are used in our proofs.

In Part II, we shall investigate these and similar problems over more general domains.
\end{abstract}

\section{Introduction}

Let $R$ be a commutative ring with $1$, and $R^*$ the multiplicative group of units (invertible elements) in $R$. For a non-empty subset $A$ of $R$, denote by $\G(A,R^*)$ the graph with vertex set $A$ and with an edge between vertices $a$ and $b$ if and only if $$a-b\in R^*.$$ For $A=R$, such graphs are special Cayley graphs. They were introduced in Gy\H{o}ry \cite{new11a}, \cite{gy1980} for the case when $R$ is the ring of integers of any number field, and in Gy\H{o}ry \cite{gy1982} for any integral domain $R$ of characteristic $0$. In these works it is shown that the connectedness properties of the graphs $\G(A,R^*)$ with finite $A$, and of their complements, play an important role in the investigation of several diophantine problems concerning the irreducibility of certain polynomials, decomposable form equations, polynomials and integral elements of given discriminant and power integral bases. In Gy\H{o}ry \cite{gy1980}, \cite{gy1982}, \cite{gy1990}, \cite{gy1992}, \cite{gy2008} the structure of these graphs was described from the point of view of connectedness. For related results and applications, we refer to Gy\H{o}ry \cite{new11b}, \cite{new13a}, \cite{new13b}, \cite{new17a}, Evertse, Gy\H{o}ry, Stewart and Tijdeman \cite{new7a}, Leutbecher \cite{new21a}, Leutbecher and Niklash \cite{new21b}, Gy\H{o}ry, Hajdu and Tijdeman \cite{new18a}, Ruzsa \cite{ruzs} and the references there.

Independently of these investigations, many people considered the graph $\G(R,R^*)$ in the case when $R$ is a finite ring or an Artinian ring. Various properties of these graphs, including connectivity, diameters and chromatic numbers, were studied among others in the papers by Dejter and Giudici \cite{newX}, Berrizbeitia and Giudici \cite{new25}, Fuchs \cite{new26}, Klotz and Sander \cite{new27}, Lucchini and Mar\'oti \cite{new24}, Lanski and Mar\'oti \cite{lm}, Akhtar, Boggess, Jackson-Henderson, Jim\'enez, Karpman, Kinzel and Pritkin \cite{abjkp} and Khashyarmanesh and Khorsandi \cite{new30}. In these works the graph $\G(R,R^*)$ is usually called unitary Cayley graph.

Erd\H{o}s and Evans \cite{new31} showed that every finite graph is isomorphic to $\G(A,(\Z/n\Z)^*)$ for some positive integer $n$ and $A\subseteq\Z/n\Z$, where $\Z/n\Z$ denotes the integers modulo $n$. In other words, any finite graph is representable as $\G(A,(\Z/n\Z)^*)$ for an appropriate $n$ and $A\subseteq\Z/n\Z$.

In the present paper we continue the investigations of Gy\H{o}ry and Ruzsa, and deal with the representability of finite graphs $G$ as $\G_S(A):=\G(A,\Z_S^*)$, where $S$ is a finite set of primes, $\Z_S$ the ring of $S$-integers in $\mathbb{Q}$, i.e. the set of those rationals whose denominators are not divisible by primes outside $S$, $\Z_S^*$ the group of $S$-units and $A$ a finite subset of $\Z_S$. If $A'=uA+a$ for some $u\in \Z_S^*$ and $a\in \Z_S$, then $A$ and $A'$ are called $S$-equivalent, since $\G_S(A)$ and $\G_S(A')$ are isomorphic. We say that a finite graph $G$ is {\sl representable / infinitely representable} with $S$ if $G$ is isomorphic to $\G_S(A)$ for some $A$ / for infinitely many non-$S$-equivalent $A$.

We present new results in Sections \ref{s2}-\ref{s-i}.
In Section \ref{s2} it is proved that for any finite graph $G$ there exist infinitely many finite sets $S$ of primes such that $G$ is representable with $S$. We deal with the infinite representability of finite graphs, in Section \ref{s-b} of cycles and complete bipartite graphs, and in Section \ref{s3} of more general graphs. Subsequently in Section \ref{s-tri} the triangles in $G$ are used for a deeper analysis. We show that if the complement of $G$ has either at least three components, or two components of order $\geq 2$, and if the order of $G$ is greater than $3\cdot 2^{16(|S|+1)}$, then $G$ is not representable with $S$. Here $|S|$ denotes the cardinality of $S$. In Section \ref{s-i} we state that $G$ is representable with every $S$ if and only if $G$ is cubical, i.e. embeddable in $\{0,1\}^n$ for some $n$. The Sections \ref{s-p1} to \ref{s-p5} contain the proofs of the statements in Sections \ref{s2} to \ref{s-i}, respectively.

In the proofs combinatorial and numbertheoretical arguments are combined with some deep results on $S$-unit equations and on the graphs $\G_S(A)$, which were established by the Thue-Siegel-Roth-Schmidt method from Diophantine approximation.

\section{Representability of graphs}
\label{s2}

Let $S$ be a finite set of primes, $\Z_S$ the ring of $S$-integers in $\Q$ and $\Z_S^*$ the group of $S$-units.

For any ordered subset $A$ of $\Z_S$, we denote by $\G_S(A)$ the graph whose vertices are the elements of $A$ and whose edges are the (unordered) pairs $\{a_i,a_j\}$ with $a_i,a_j\in A$ for which
$$
a_i-a_j\in \Z_S^*;
$$
cf. Gy\H{o}ry \cite{gy1980} where the complements of these graphs were studied. The ordered subsets $A$ and $A'$ of $\Z_S$ are called $S$-equivalent if
$$
A'=uA+b
$$
for some $u\in \Z_S^*$ and $b\in \Z_S$. As we observed before, the graphs $\G_S(A)$ and $\G_S(A')$ are then isomorphic.

Throughout the paper, all graphs we consider are simple. By the order of a graph $G$ we mean the number of its vertices, denoted by $|G|$. By a component of $G$ we mean a connected component. We recall that a graph $G$ is {\sl representable with} $S$ if there is a subset $A$ of $\Z_S$ such that $\G_S(A)$ is isomorphic to $G$. Similarly, $G$ is said to be {\sl finitely representable with} $S$ if $G$ is isomorphic to some $\G_S(A)$, but only to finitely many of them, up to $S$-equivalence. Further, $G$ is said to be {\sl infinitely representable with} $S$ if $G$ is isomorphic to $\G_S(A)$ for infinitely many pairwise nonequivalent $A$.

In this section we formulate some basic results.

\begin{theorem} \label{thm1}
For any finite graph $G$ there exist infinitely many finite sets $S$ of primes such that $G$ is representable with $S$.
\end{theorem}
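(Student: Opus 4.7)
The plan is to first construct a single representation of $G$ as $\G_{S_0}(A)$ by a direct Chinese-Remainder-Theorem argument, then generate infinitely many distinct sets $S$ by scaling $A$.

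Enumerate the vertices of $G$ as $v_1,\dots,v_n$ and the non-edges as $e_1=\{i_1,j_1\},\dots,e_m=\{i_m,j_m\}$; the case $m=0$ (i.e.\ $G=K_n$) is trivial (take $A=\{1,\dots,n\}$ and let $S$ be the primes at most $n$), so assume $m\ge 1$. Pick distinct primes $q_1,\dots,q_m$, each larger than $n$; the prime $q_t$ will serve as an \emph{obstruction prime} for the non-edge $e_t$. By CRT, choose integers $a_1,\dots,a_n$ satisfying, for every $t=1,\dots,m$,
\[
a_k\equiv 0\pmod{q_t}\ \text{if}\ k\in\{i_t,j_t\},\qquad a_k\equiv k\pmod{q_t}\ \text{otherwise},
\]
and by adding suitable multiples of $Q=q_1\cdots q_m$ to individual $a_k$'s we may moreover arrange the $a_k$ to be pairwise distinct.

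A short case analysis shows $q_t\nmid a_k-a_l$ for every edge $\{v_k,v_l\}$ and every $t$: if $\{k,l\}\cap\{i_t,j_t\}=\emptyset$ then $a_k-a_l\equiv k-l\not\equiv 0\pmod{q_t}$ since $q_t>n$; if exactly one of $k,l$ lies in $\{i_t,j_t\}$, say $k=i_t$, then $a_k-a_l\equiv -l\not\equiv 0\pmod{q_t}$; and the case $\{k,l\}=\{i_t,j_t\}$ cannot occur, as $e_t$ is a non-edge. Set
\[
S_0:=\{p\ \text{prime}:\ p\mid a_k-a_l\ \text{for some edge}\ \{v_k,v_l\}\},
\]
which is finite and, by the above, disjoint from $\{q_1,\dots,q_m\}$. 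For each edge $\{v_k,v_l\}$ the difference $a_k-a_l$ has all its prime factors in $S_0$ and so is an $S_0$-unit, while for each non-edge $e_t$ the difference $a_{i_t}-a_{j_t}$ is divisible by $q_t\notin S_0$ and is therefore not an $S_0$-unit. Hence $\G_{S_0}(A)\cong G$.

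To produce infinitely many such $S$, take any prime $p$ outside the finite set $S_0\cup\{q_1,\dots,q_m\}$ and put $A_p:=pA$, $S_p:=S_0\cup\{p\}$. Multiplying all differences by $p\in S_p$ preserves edge-differences as $S_p$-units, while every non-edge-difference still contains an obstruction $q_t\notin S_p$ and so remains a non-unit. Thus $\G_{S_p}(A_p)\cong G$, and as $p$ ranges over the cofinitely many admissible primes the sets $S_p$ are pairwise distinct. The crux of the argument is organizing the CRT so that no obstruction prime divides any edge-difference; the remaining steps are bookkeeping.
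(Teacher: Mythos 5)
Your proof is correct. It rests on the same two devices as the paper's argument --- a distinct obstruction prime $q_t>n$ attached to each non-edge, a Chinese Remainder construction of the integers $a_k$, and the a posteriori definition of $S$ as the set of primes dividing some edge-difference --- but you deploy them globally in one shot, whereas the paper proceeds by induction on the number of vertices, adjoining one vertex at a time and enlarging $S_{k-1}$ to $S_k$ as it goes. The inductive version needs extra bookkeeping that your version avoids: because $S_k$ acquires new primes at each step, the paper must impose additional congruences $a\equiv x_d\pmod d$ for the primes $d$ already blocking old non-edges, to ensure none of these $d$ slips into $S_k$ and retroactively turns a non-edge into an edge. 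In your construction this danger is dispatched once and for all by the single verification that no $q_t$ divides any edge-difference, so $S_0$ is automatically disjoint from the obstruction primes. The infinitude step also differs: the paper reruns the last induction step with an extra congruence forcing a prescribed new prime into $S$, while you simply pass to $S_p=S_0\cup\{p\}$ for a fresh prime $p$ (the scaling $A\mapsto pA$ is not even needed, since after adjoining $p$ every non-edge difference is still divisible by some $q_t\notin S_0\cup\{p\}$ and so remains a non-unit). The net effect is a shorter, cleaner argument; what the paper's incremental formulation buys is methodological consistency with the vertex-by-vertex constructions underlying Theorems \ref{thm4} and \ref{thm3}.
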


As usual, by a forest graph we mean a graph containing no cycles, i.e. a finite, disjoint union of trees.

\begin{theorem}\label{thm3}
Let $S$ be any fixed finite set of primes, and $G$ be a finite forest graph. Then $G$ can be represented with $S$.
\end{theorem}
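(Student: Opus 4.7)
The plan is to induct on the number of vertices $n = |G|$, exploiting the fact that every forest admits a vertex of degree at most one. The base case $n = 1$ is trivial: take $A = \{0\}$. In the inductive step, pick a vertex $v$ of $G$ with $\deg_G(v) \leq 1$, apply the inductive hypothesis to the forest $G - v$ to obtain a representation $G - v \cong \G_S(A')$ for some $A' \subseteq \Z_S$, and try to extend $A'$ by a single new element $v^* \in \Z_S \setminus A'$ so that $\G_S(A' \cup \{v^*\}) \cong G$. The condition one has to enforce is that $v^* - a \in \Z_S^*$ holds exactly when $a$ represents the neighbor of $v$ in $G$. Finding such a $v^*$ is exactly the sort of question that $S$-unit equation theory is designed to settle.

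If $\deg_G(v) = 1$, let $a_u \in A'$ represent the unique neighbor of $v$, and look for $v^* = a_u + \varepsilon$ with $\varepsilon \in \Z_S^*$. Then $v^* - a_u = \varepsilon$ is automatically an $S$-unit, so the remaining requirement reads $\varepsilon + (a_u - a) \notin \Z_S^*$ for each $a \in A' \setminus \{a_u\}$. For fixed nonzero $c = a_u - a$, the equation $\varepsilon + c = \eta$ in $\varepsilon, \eta \in \Z_S^*$ is the classical $S$-unit equation, whose solution set is finite by the theorem of Mahler and Evertse; each such $a$ thus rules out only finitely many $\varepsilon$, and since $\Z_S^*$ is infinite (as $|S| \geq 1$), an admissible $\varepsilon$ survives. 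If instead $\deg_G(v) = 0$, fix any $a_0 \in A'$ and a prime $q \notin S$, and look for $v^* = a_0 + q\varepsilon$ with $\varepsilon \in \Z_S^*$. Now $v^* - a_0 = q\varepsilon$ is divisible by the non-$S$ prime $q$ and hence never an $S$-unit, while each non-adjacency condition $q\varepsilon + (a_0 - a) \notin \Z_S^*$ for $a \in A' \setminus \{a_0\}$ translates into an $(S \cup \{q\})$-unit equation in $q\varepsilon$ and $\eta$, with only finitely many solutions, and therefore excludes only finitely many $\varepsilon$.

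The only deep ingredient is thus the finiteness theorem for $S$-unit equations (applied, in the isolated-vertex case, over the slightly enlarged set $S \cup \{q\}$); the remainder is bookkeeping. I do not anticipate a genuine obstacle, only the standard care needed to check that the finitely many forbidden values of $\varepsilon$ never exhaust the infinite group $\Z_S^*$, together with the minor additional step of avoiding the at most $|A'|$ values of $\varepsilon$ which would cause $v^*$ to coincide with an existing element of $A'$.
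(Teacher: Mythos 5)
Your proof is correct and follows essentially the same strategy as the paper: the paper's Theorem \ref{thm4} supplies exactly your two extension steps (attaching a pendant vertex to a prescribed vertex via the finiteness of two-variable $S$-unit equations, and adjoining an isolated vertex), and Theorem \ref{thm3} is then obtained by building the forest one vertex at a time. The only minor difference is the isolated-vertex case, where the paper uses an elementary Chinese Remainder Theorem argument (choosing $a'\equiv a_i\pmod{q_i}$ for distinct primes $q_i\notin S$) instead of your single auxiliary prime $q$ combined with $S$-unit equations over $S\cup\{q\}$.
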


In fact, Theorem \ref{thm3} is a simple consequence of the following result.

\begin{theorem} \label{thm4}
Let $S$ be any fixed finite set of primes, and $A$ be any fixed finite set of $S$-integers.

\vskip.2truecm

\noindent
{\rm i)} There exist infinitely many  $a'\in \Z_S$ outside $A$ such that writing $A'=A\cup \{a'\}$, $a'$ is an isolated vertex of $\G_S(A')$.

\vskip.2truecm

\noindent
{\rm ii)} For every $a\in A$ there exist infinitely many $a'\in \Z_S$ such that writing $A'=A\cup \{a'\}$, in $\G_S(A')$ the vertex $a'$ is connected by an edge with $a$ only.
\end{theorem}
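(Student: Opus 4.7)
My plan is to prove the two parts of Theorem~\ref{thm4} separately; part~(i) is a Chinese Remainder Theorem argument, while part~(ii) rests on the classical finiteness theorem for $S$-unit equations.

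\emph{For part (i):} List $A=\{a_1,\ldots,a_k\}$, and use that infinitely many primes lie outside the finite set $S$ to pick $k$ distinct primes $q_1,\ldots,q_k\notin S$. The ideals $q_i\Z_S$ are then pairwise coprime maximal ideals of $\Z_S$, so the Chinese Remainder Theorem supplies an $a'\in\Z_S$ with $a'\equiv a_i\pmod{q_i\Z_S}$ for every $i$, and the full set of such $a'$ is an infinite coset of $q_1\cdots q_k\Z_S$. For each such $a'$ and each $i$ the prime $q_i\notin S$ divides $a'-a_i$ in $\Z_S$, so $a'-a_i\notin\Z_S^*$; hence $a'$ is isolated in $\G_S(A')$. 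Discarding the finitely many elements of the coset that happen to lie in $A$ still leaves infinitely many admissible $a'$.

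\emph{For part (ii):} Fix $a\in A$ and set $b_i:=a_i-a$ for each $a_i\in A\setminus\{a\}$; each $b_i$ is a nonzero element of $\Z_S$. I look for $S$-units $u\in\Z_S^*$ such that $a':=a+u$ lies outside $A$ and such that $u-b_i=a'-a_i$ is not an $S$-unit for any $i$; then $a'-a=u\in\Z_S^*$, while $a'$ is joined in $\G_S(A')$ to no other vertex of $A$. The condition $a+u\in A$ excludes only finitely many $u$. The condition that $u-b_i$ be an $S$-unit, say $u-b_i=v$, is exactly an $S$-unit solution $(u,v)$ of the equation $u-v=b_i$ with fixed nonzero right-hand side, and by the classical $S$-unit equation theorem only finitely many such pairs exist; hence only finitely many $u$ are forbidden in total. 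Since $\Z_S^*$ is infinite (as $S\neq\emptyset$), infinitely many admissible $u$, and thus infinitely many valid $a'$, remain.

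The main obstacle, such as it is, lies in part~(ii), where invoking the classical $S$-unit equation theorem (a deep Diophantine input) is essential. Once that theorem is in hand, the bookkeeping --- finitely many bad $u$ for each of the finitely many indices $i$, against the infinite set $\Z_S^*$ --- is routine. Part~(i) is entirely elementary.
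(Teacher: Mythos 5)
Your proposal is correct and takes essentially the same route as the paper: part (i) is the same Chinese Remainder Theorem construction with primes $q_i\notin S$, and part (ii) sets $a'=a+u$ for an $S$-unit $u$ and excludes the finitely many $u$ for which some $u-(a_i-a)$ is again an $S$-unit, exactly the paper's appeal to the finiteness theorem for two-term $S$-unit equations. (Both arguments, yours explicitly and the paper's implicitly, use that $S$ is non-empty so that $\Z_S^*$ is infinite.)
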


The following result shows that the investigations can be reduced to components of a graph.

\begin{theorem}\label{thmuj}
Let $S$ be any fixed finite set of primes, and suppose that every component of a graph $G$ can be represented with $S$. Then $G$ can be represented with $S$.
\end{theorem}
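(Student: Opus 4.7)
The plan is to argue by induction on the number $k$ of components of $G$. Write $G = G_1 \sqcup \dots \sqcup G_k$ and pick finite $A_i \subset \Z_S$ with $\G_S(A_i) \cong G_i$. Multiplying each $A_i$ by a suitable element of $\Z_S^*$ (an $S$-equivalence, which preserves the represented graph) lets one assume $A_1, \dots, A_k \subset \Z$. The case $k = 1$ is trivial, so assume inductively that a set $A' \subset \Z$ with $\G_S(A') \cong G_1 \sqcup \dots \sqcup G_{k-1}$ has already been constructed. I will find an integer $t$ such that $A := A' \cup (A_k + t)$ represents $G$.

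Since translation preserves differences, $\G_S(A_k + t) \cong \G_S(A_k) \cong G_k$ automatically, and the graph induced on $A'$ is unchanged. It therefore remains only to ensure that no edge of $\G_S(A)$ crosses between $A'$ and $A_k + t$, i.e.\ that
\[
a - b - t \notin \Z_S^* \quad \text{for every } a \in A' \text{ and } b \in A_k.
\]
The set of forbidden $t \in \Z$ is accordingly $\bigcup_{a \in A',\, b \in A_k}\bigl(a - b - (\Z_S^* \cap \Z)\bigr)$. Since $\Z_S^* \cap \Z = \{\pm \prod_{p \in S} p^{e_p} : e_p \geq 0\}$ contains only $O_S((\log N)^{|S|})$ elements of absolute value at most $N$, the number of forbidden $t$ in $[-N, N]$ is bounded by $|A'|\,|A_k|\,O((\log N)^{|S|}) = o(N)$. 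Hence for $N$ large there are (in fact infinitely many) admissible integers $t$, and any one of them closes the induction.

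The only non-routine ingredient is the counting bound for $\Z_S^* \cap \Z$ in a bounded interval, and this is standard, following from the fact that $\Z_S^* \cap \Z$ is the finitely generated multiplicative group generated by $-1$ and the primes in $S$. Apart from this estimate the argument is pure bookkeeping, so I do not foresee any serious obstacle. One might wonder whether Theorem \ref{thm4}(i) suffices on its own, by adding the elements of $A_k$ one at a time as isolated vertices, but that route would require creating the internal edges of $G_k$ after the fact, which is not available; the translation strategy sidesteps this difficulty entirely.
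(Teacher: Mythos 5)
Your proof is correct, and it follows the same overall strategy as the paper: induct on the number of components and translate the new component by an integer chosen so that no difference across the two pieces lands in $\Z_S^*$. The one genuine difference is how the existence of a good translation is justified. The paper argues ``as in the proof of Theorem \ref{thm4}(ii)'', i.e.\ it excludes the bad translations by appealing to the finiteness of the number of solutions of $S$-unit equations $u+v=d$ (Theorem A of Evertse), which is a deep Diophantine input. You instead observe that $\Z_S^*\cap\Z$ is the group generated by $-1$ and the primes of $S$, hence has only $O_S((\log N)^{|S|})$ elements in $[-N,N]$, so the forbidden set of translations has density zero. This counting argument is completely elementary and self-contained, and it also makes explicit that admissible $t$ exist in abundance; that is a small but real gain over the paper's route for this particular theorem. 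One minor point you should add: since $0\notin\Z_S^*$, your displayed condition $a-b-t\notin\Z_S^*$ does not by itself rule out $a-b-t=0$, i.e.\ it does not force $A'$ and $A_k+t$ to be disjoint. You must additionally exclude the finitely many $t$ of the form $a-b$ with $a\in A'$, $b\in A_k$ (the paper states this disjointness requirement explicitly); this costs nothing against your density bound, so the argument goes through unchanged.
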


\section{Cyclic and bipartite graphs}
\label{s-b}

Let $S$ be a finite non-empty set of primes, $\Z_S$ the set of $S$-integers and $\Z_S^*$ the group of $S$-units in $\mathbb{Q}$. Given a cycle
$$a_1 \to a_2 \to \cdots \to a_n \to a_1$$
in $\G_S(\Z_S)$, the 'labels' of the edges, $u_i = a_{i+1} - a_i$ for $i=1, \dots, n-1$ and $u_n=a_1 - a_n$, satisfy
$$u_1 + u_2 + \cdots +u_n = 0.$$
We call the cycle nondegenerate if there is no non-empty proper zero subsum
$$u_{i_1} + u_{i_2} + \dots + u_{i_m} = 0, ~~1 \leq i_1 < \cdots < i_m \leq n,~0 < m < n.$$
Ruzsa \cite{ruzs} proved the following results.
\vskip 2mm
\noindent
i) If $2\in S$, then there are nondegenerate cycles of every length among the induced subgraphs of $\G_S(\Z_S)$. \\
ii) If $2\notin S$, then there are cycles of every even length among the induced subgraphs of $\G_S(\Z_S)$ and none of odd length. \\
iii) If $2\notin S$ and $3 \in S$, then there are nondegenerate cycles of every even length among the induced subgraphs of $\G_S(\Z_S)$.
\vskip 2mm
\noindent
Ruzsa conjectured that, if $2 \notin S$, then there are nondegenerate induced cycles of every sufficiently large even length.
He further proved that, for any $\varepsilon > 0$, any subgraph of $\G_S(\Z_S)$ on $n$ vertices has average degree $< c_{\varepsilon, S} n^{\varepsilon}$.

We say that a graph $G$ is doubly connected if after deleting any edge of $G$, the graph obtained is connected.
If $G$ is not doubly connected, we say that it is at most simply connected. We consider some doubly connected graphs. Let $C_n$ denote the cyclic graph of order $n$, and $K_{m,n}$ the complete bipartite graph of type $(m,n)$.

\begin{theorem}
\label{prop3.6}
\noindent
{\rm i)} The graphs $C_{2n}$ $(n\geq 2)$ and $K_{2,2}$ are infinitely representable with all $S$. \\
{\rm ii)}  The graphs $C_3$, $C_5$ and $K_{m,n}$ with $m>n>1$ or $m = n \geq 3$ are finitely representable with every $S$.
\end{theorem}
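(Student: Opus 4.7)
Since $K_{2,2}\cong C_4$, Part~(i) reduces to proving the infinite representability of $C_{2n}$ for every $n\geq 2$. For $n=2$ the explicit family $A_t=\{0,1,t,t+1\}$ with $t\in\Z_S^*$ works: the four edges of $C_4$ come from the differences $\pm 1$ and $\pm t$, while the two non-edges require $t+1,t-1\notin\Z_S^*$, a condition failing for only finitely many $t$ by Mahler's theorem on $u-v=\pm 1$ in $S$-units. Any $S$-equivalence $A_t\to A_{t'}$ must permute the two distinguished ``unit-apart'' pairs, which forces $t'$ into a finite orbit of $t$ under the symmetry group generated by $t\mapsto -t$ and $t\mapsto 1/t$, so infinitely many $S$-equivalence classes survive.

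For general $C_{2n}$ with $n\geq 3$, my plan is to iterate Theorem~\ref{thm4}(ii). First fix $a_1=0$ and $a_{2n}=\alpha\in\Z_S^*$ (giving the closing edge), then choose $a_{2n-1}=\alpha+\gamma$ for an $S$-unit $\gamma$ with $\alpha+\gamma\notin\Z_S^*$, so that $a_{2n-1}$ is joined to $a_{2n}$ but not to $a_1$. Iterating Theorem~\ref{thm4}(ii), I would build a path $a_2,a_3,\ldots,a_{2n-3}$ in which each new vertex is joined only to its predecessor and avoids being joined to any earlier vertex including $a_{2n-1}$ and $a_{2n}$; at every step Theorem~\ref{thm4}(ii) supplies infinitely many admissible positions. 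The final vertex $a_{2n-2}$ must be joined to both $a_{2n-3}$ and $a_{2n-1}$, which amounts to solving a two-term $S$-unit equation $u-v=a_{2n-1}-a_{2n-3}$ with $u,v\in\Z_S^*$; the closure has finitely many solutions per configuration, so the infinite family must be extracted by varying earlier vertices in the path.

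For Part~(ii), the case $C_3$ is immediate: after normalizing two vertices to $0,1$, the third is $x\in\Z_S^*$ with $x-1\in\Z_S^*$, an equation with finitely many solutions. For $C_5$ the cyclic edge-differences are $S$-units $u_1,\ldots,u_5$ with $u_1+\cdots+u_5=0$, and the no-chord conditions force every proper subsum to be nonzero and non-unit; hence the equation is nondegenerate and by the Evertse--Schlickewei--Schmidt theorem has only finitely many projective solutions, giving finitely many representations after normalization. For $K_{m,n}$ with $m>n>1$ or $m=n\geq 3$, normalize two vertices on the larger side to $0,1$; every vertex of the opposite part then lies in the finite set of $y\in\Z_S^*$ with $y-1\in\Z_S^*$, and each remaining vertex on the larger side is pinned by $n\geq 2$ independent $S$-unit equations $x-y\in\Z_S^*$ for the already-determined $y$'s, giving finitely many possibilities.

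\noindent\textbf{Main obstacle.} The genuine difficulty is Part~(i) for $n\geq 3$: the closing step has only finitely many solutions per configuration, so the infinite family must come from the infinitely many path configurations produced by iterating Theorem~\ref{thm4}(ii). One needs to verify that among these, infinitely many lead to a solvable closing equation \emph{and} avoid all spurious chords. I expect this requires a variational argument combined with Mahler-type finiteness to control the exceptional set. A secondary obstacle in Part~(ii) is confirming the overdetermined character of the $K_{m,n}$ system in the boundary cases $(m,n)=(3,2)$ and $m=n=3$, where one must use the combinatorial assumption on $(m,n)$ to extract at least three independent $S$-unit equations per unknown vertex.
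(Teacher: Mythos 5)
Your treatment of $C_3$, $C_5$ and $K_{2,2}$ is essentially the paper's own argument (the paper uses $A_w=(0,u,w,u+w)$ for $K_{2,2}$, which is your $A_t$ with $u=1$), but the two places you flag as obstacles are genuine gaps, and in both cases the paper takes a different route. For $C_{2n}$ with $n\geq 3$, iterating Theorem \ref{thm4}(ii) cannot work: that theorem only attaches pendant vertices (vertices of degree one), so it builds trees, and the closing vertex of the cycle must be simultaneously adjacent to two prescribed vertices --- an $S$-unit equation with finitely many solutions per configuration, exactly as you observe. You do not resolve this, and no ``variational argument'' is supplied. The paper avoids the issue entirely: it quotes the inductive construction in Theorem 3.1 of Ruzsa \cite{ruzs}, and notes that the claim also follows from Theorem \ref{thmcub} (even cycles are cubical, hence representable with all $S$, hence infinitely representable by Corollary \ref{cor4.2}). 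The underlying explicit construction uses rapidly growing $S$-units as edge labels $\pm w_1,\dots,\pm w_n$ around the cycle, so that no subsum other than the forced ones can be an $S$-unit; this is quite different from growing a path and trying to close it.

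For $K_{m,n}$ your opening normalization is invalid: two vertices on the same side of $K_{m,n}$ are non-adjacent, so their difference is \emph{not} an $S$-unit, and an $S$-equivalence $A\mapsto uA+b$ with $u\in\Z_S^*$ cannot send them to $0$ and $1$. After a legitimate normalization the difference $c$ of two same-side vertices remains a free non-unit parameter, and for fixed $c$ Theorem A only bounds the solutions of $x-y=c$; the real issue is to bound the possible $c$ up to an $S$-unit factor. This is exactly what the paper's Lemma \ref{lem5.3} does: up to a unit factor there are only finitely many $c$ admitting \emph{more than two} representations $c=x+y$ with $x,y\in\Z_S^*$, and one needs the side of size $\geq 3$ (guaranteed by $m>n>1$ or $m=n\geq 3$) to produce three distinct such representations of $b_1-b_2$. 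Your ``$n\geq 2$ independent equations'' gives only two representations, which Lemma \ref{lem5.3} cannot use, so the difficulty is not confined to the boundary cases $(3,2)$ and $(3,3)$ --- it is present for all admissible $(m,n)$. The paper packages this as Lemma \ref{lem5.1} and deduces the $K_{m,n}$ case from Theorem \ref{thm5.4}, since $\overline{K_{m,n}}$ has two components of orders $\geq 2$ (and the paper explicitly notes there is no circularity in doing so).
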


\noindent It depends on $S$ whether $C_{2n+1}$ for $n>2$ is infinitely representable.

A large complete bipartite graph is not representable with $S$.

\begin{theorem}
\label{thm3.7}
If $m>1$, $n>1$ and
\begin{equation}
\label{star}
m+n>3\cdot 2^{16(|S|+2)}
\end{equation}
then $K_{m,n}$ is not representable with $S$.
\end{theorem}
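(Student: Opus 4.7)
The plan is to show that each side of the bipartition of $K_{m,n}$ must have size at most $2^{16(|S|+2)}$, by reducing to a two-variable $S$-unit equation and invoking the Beukers--Schlickewei bound on the number of its solutions.

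Suppose, for contradiction, that $K_{m,n}$ is representable with $S$ via disjoint sets $A=\{a_1,\dots,a_m\}$ and $B=\{b_1,\dots,b_n\}$ in $\Z_S$: every cross difference $a_i-b_j$ lies in $\Z_S^*$, while no difference within $A$ or within $B$ does. Since $m\geq 2$ we may fix two distinct $a_1,a_2\in A$, and set $\delta:=a_1-a_2$; note $\delta\in\Z_S\setminus\{0\}$ and $\delta\notin\Z_S^*$. For each $b\in B$ the differences $a_1-b$ and $a_2-b$ lie in $\Z_S^*$ and satisfy $(a_1-b)-(a_2-b)=\delta$. Dividing by $\delta$ yields a solution of
\begin{equation*}
x+y=1,\qquad x:=(a_1-b)/\delta,\ \ y:=-(a_2-b)/\delta,
\end{equation*}
with $x,y$ lying in the multiplicative group $\Gamma:=\langle \Z_S^*,\delta\rangle\subset\Q^*$. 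Since $\Z_S^*$ has free rank $|S|$ and we adjoin at most one new generator, $\Gamma$ has free rank at most $|S|+1$. Different $b$'s produce different pairs $(x,y)$, since $b$ is recovered from $x$ as $b=a_1-\delta x$.

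By the Beukers--Schlickewei theorem on $x+y=1$ in a finitely generated multiplicative subgroup of $\Q^*$ of rank $\rho$, the number of solutions is at most $2^{16(\rho+1)}$; with $\rho\leq|S|+1$ this gives $n\leq 2^{16(|S|+2)}$. Applying the symmetric argument with two distinct elements of $B$ (allowed since $n\geq 2$) gives $m\leq 2^{16(|S|+2)}$, so
\begin{equation*}
m+n\;\leq\;2\cdot 2^{16(|S|+2)}\;<\;3\cdot 2^{16(|S|+2)},
\end{equation*}
contradicting (\ref{star}).

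The main subtlety is the handling of the coefficient $\delta$: because $\delta\notin\Z_S^*$, the pair $(x,y)$ does not automatically lie in $\Z_S^*\times\Z_S^*$, and naively enlarging $S$ by every prime dividing $\delta$ could blow up the bound arbitrarily. The key observation is that it suffices to adjoin the single element $\delta$ to the group, which raises the free rank by at most one; this is precisely what accounts for the ``$+2$'' in the exponent $16(|S|+2)$. A secondary point to verify is that in passing from the pair $(a_1-b,a_2-b)$ to $(x,y)$ no solutions are identified, which is clear from the explicit inversion $b=a_1-\delta x$.
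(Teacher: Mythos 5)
Your argument is correct, but it follows a genuinely different route from the paper's. The paper proves Theorem \ref{thm3.7} in one line from the cited Theorem C: for $m,n>1$ the complement $\overline{K_{m,n}}$ is the disjoint union of $K_m$ and $K_n$, hence has two components of order $\geq 2$, which Theorem C forbids as soon as $m+n>3\cdot 2^{16(|S|+2)}$. You instead give a direct, self-contained reduction to a two-variable unit equation, in effect reconstructing for this special case the mechanism that lies behind Theorem C. Your bookkeeping is sound: each $b\in B$ yields a distinct solution of $x+y=1$ with $x,y$ in $\Gamma=\langle\Z_S^*,\delta\rangle$ of rank at most $|S|+1$, and the Beukers--Schlickewei bound $2^{8(r+2)}$ for rank-$r$ subgroups of $(\mathbb{C}^*)^2$, applied to $\Gamma\times\Gamma$ of rank $2(|S|+1)$, is exactly your $2^{16(\rho+1)}=2^{16(|S|+2)}$ --- which also makes transparent where the constant in \eqref{star} comes from. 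Your version in fact proves slightly more than the theorem asks: $\max(m,n)\leq 2^{16(|S|+2)}$, so non-representability already follows from a hypothesis weaker than \eqref{star}. Two small remarks. First, you could stay entirely within the paper's toolkit by applying Theorem A (Evertse) to $\delta^{-1}x-\delta^{-1}y=1$ in $x,y\in\Z_S^*$, which gives $m,n\leq 3\cdot 7^{2|S|+3}$ and hence $m+n\leq 6\cdot 7^{2|S|+3}<3\cdot 2^{16(|S|+2)}$; this avoids importing an external theorem whose exact constant your final comparison depends on. Second, the observation that $\delta\notin\Z_S^*$ is correct but never used; only the rank bound for $\Gamma$ matters.
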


\section{Some results on infinite representability}
\label{s3}

From now on, a graph $G$ will mean a finite simple graph.

We present two theorems which show that under suitable circumstances representability implies infinite representability. Our next result shows that the representability of a graph $G$ with a special $S$ over ${\mathbb Z}$ is already sufficient for the infinite representability of $G$ with all $S$.

\begin{theorem}
\label{thm2}
Suppose that a graph $G$ with $|G|\geq 3$ is representable with some $S_0$ of the form $S_0=\{p\}$, where $p$ is a prime larger than twice the number of edges of $G$. Then $G$ is infinitely representable with all finite sets $S$ of primes.
\end{theorem}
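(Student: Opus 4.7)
The plan is to treat the given representation $A_0\subset\Z_{S_0}$ of $G$ as a combinatorial template which, thanks to the size hypothesis $p>2|E(G)|$, encodes no arithmetic specific to the prime $p$ and can therefore be transported to every $\Z_S$. After clearing denominators I may take $A_0=\{a_1,\ldots,a_n\}\subset\Z$, so each edge $\{v_i,v_j\}$ of $G$ gives $a_i-a_j=\varepsilon_{ij}\,p^{k_{ij}}$ with $\varepsilon_{ij}\in\{\pm 1\}$. I fix a spanning forest $F$ of $G$, choose a root in each component, and expand $a_i-a_{\mathrm{root}}=\sum_{c=1}^{C}n_c^{(i)}\,p^{K_c}$, where $K_1,\ldots,K_C$ enumerate the distinct exponents appearing on edges of $F$ and $n_c^{(i)}\in\Z$ is the signed count of colour-$c$ tree edges on the path from the root to $v_i$.

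The combinatorial heart of the argument is a $p$-adic cancellation lemma: because $p>2|E(G)|$, any integer relation $\sum_l c_l p^{h_l}=0$ with $\sum_l|c_l|<p$ decomposes level by level, i.e.\ $\sum_{l:\,h_l=h}c_l=0$ for every $h$ (proof: reduce modulo $p$ at the minimal level and iterate). Applied to the edge identity $\sum_c(n_c^{(i)}-n_c^{(j)})p^{K_c}=\pm p^{K_{c^*}}$, this forces $n_c^{(i)}-n_c^{(j)}=\pm\delta_{c,c^*}$; consequently, for every non-edge $\{v_i,v_j\}$, the integer vector $m^{(i,j)}:=(n_c^{(i)}-n_c^{(j)})_c$ is non-zero and is not a signed standard basis vector.

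For any finite set $S$ of primes I now transport by choosing $(u_c)\in(\Z_S^*)^C$ and setting $a'_i:=\sum_c n_c^{(i)}u_c\in\Z_S$. Each required edge is realised automatically, since $a'_i-a'_j=\pm u_{c^*}\in\Z_S^*$ involves no arithmetic with $p$. The main task is the non-edge condition $\sum_c m_c^{(i,j)}u_c\notin\Z_S^*$ for every non-edge, and this is where the $S$-unit equation theorem of Evertse and van der Poorten--Schlickewei enters: for each fixed $m^{(i,j)}$ of the ``spread'' shape (at least two non-zero coordinates, each in $\{-1,0,1\}$), the equation $\sum_c m_c U_c=V$ in $(U_c,V)\in(\Z_S^*)^{C+1}$ has only finitely many non-degenerate projective solutions, and its degenerate families lie in a finite union of proper sub-tori of $(\Z_S^*)^C$. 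Taking the union over the finitely many non-edges still yields a proper exceptional subset; hence infinitely many $\Z_S^*$-scalar classes of $(u_c)$ lie outside it and produce pairwise non-$S$-equivalent sets $A^u$ representing $G$.

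The delicate point --- and the principal obstacle --- is that a non-edge vector $m^{(i,j)}$ consisting of a single entry of modulus $\geq 2$ would survive every substitution $p\mapsto u$ as $\pm 2u,\pm 3u,\ldots$, which can easily be an $S$-unit when $S$ contains the offending prime. The slack $p>2|E(G)|$ must therefore be exploited first to replace $A_0$ by a representation of $G$ in which every non-edge vector already has the ``spread'' form above; carrying out this normalisation by constructing an alternative template of $G$ that realises the same combinatorial cycle structure but along fresh exponent levels is the technical core of the proof. The borderline case $C\le 1$, in which all edges carry a single exponent, forces $G$ to be a disjoint union of paths and is disposed of directly via Theorems~\ref{thm3} and \ref{thm4}.
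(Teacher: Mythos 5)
Your strategy is in essence the one the paper follows: regard $A_0\subseteq\Z$ as a template whose vertex values are integer combinations $\sum_c n_c^{(i)}p^{K_c}$ of the distinct powers of $p$ occurring as edge labels, use $p>2|E(G)|$ to show that every relevant integer relation among these values splits level by level, and then substitute independent $S$-units for the powers $p^{K_c}$ to transport the template into $\Z_S$. The paper works with the edge labels $\pm u_1,\dots,\pm u_k$ directly (no spanning forest), imposes the archimedean growth condition $2n|w_i|<|w_{i+1}|$ on the substituted units $w_i$, and derives well-definedness and injectivity of the new vertex values from the same cancellation principle you isolate; the reduction of the acyclic case to Theorems \ref{thm3} and \ref{thm3.1} is likewise as in the paper. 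So the route is the same; the question is whether the non-edge condition survives the substitution.

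Here your proof is genuinely incomplete, and the gap is exactly the one you flag in your last paragraph without closing it. Non-edge vectors with a single non-zero coordinate $c$, $2\le|c|\le n$, really do occur: for $p>10$ the set $A_0=\{0,\,p,\,p+p^2,\,p^2,\,2p\}$ represents $C_4$ with a pendant edge over $S_0=\{p\}$, and the non-adjacent pair $\{0,2p\}$ has vector $(2,0)$. For such a pair the transported difference is $c\,u_{K}$, which is an $S$-unit for \emph{every} choice of the $u_c$ as soon as all prime factors of $c$ lie in $S$; no avoidance of exceptional $S$-unit relations, and no choice of sub-torus complement, can repair this. The ``normalisation along fresh exponent levels'' that you announce as the technical core is therefore indispensable, and it is not a routine step --- one must re-label edges of $G$ carrying the same power of $p$ by distinct new units while keeping all cycle sums zero and all non-edge differences non-units --- yet it is nowhere carried out, so the proposal does not constitute a proof. (You should be aware that the paper's own proof also does not perform such a normalisation: it infers the absence of spurious edges from the equivalence of the linear relations $\sum c_iw_i=0$ and $\sum c_iu_i=0$, and the reader should check how this excludes a non-edge difference such as $2w_1$ from being an $S$-unit.) A secondary, repairable looseness: for the ``spread'' non-edges it is cleaner to argue that each relation $\sum_c m_cu_c\in\Z_S^*$ with at least two non-zero $m_c$ confines the ratios of the $u_c$ to finitely many values by Theorems A and B (treating degenerate subsums recursively), so that finitely many non-edges exclude only finitely many ratio classes; the appeal to sub-tori as stated does not by itself show the exceptional set is avoidable.
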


Now we provide two simple consequences of the above result.

\begin{cor} \label{cor4.1}
Let $S$ be a finite set of primes. Let $G$ be a graph which is finitely representable with $S$. Then there exist infinitely many sets of primes $S'$ such that $G$ is not representable with $S'$.
\end{cor}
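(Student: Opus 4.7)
The plan is to invoke the contrapositive of Theorem \ref{thm2}. That theorem states that if $G$ is representable with some singleton set $S_0 = \{p\}$ for a prime $p$ exceeding twice the number of edges of $G$, then $G$ is automatically infinitely representable with every finite set of primes. In particular, the existence of even a single such $S_0$ would force $G$ to be infinitely representable with the given $S$, directly contradicting the hypothesis that $G$ is merely finitely representable with $S$.

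Concretely, I would proceed as follows. Let $m$ denote the number of edges of $G$, and let $P$ be the (infinite, by Euclid) set of primes $p$ satisfying $p > 2m$. For each $p \in P$ set $S_p := \{p\}$; these form an infinite family of pairwise distinct finite sets of primes. I then argue by contradiction: were $G$ representable with some $S_p$, $p\in P$, Theorem \ref{thm2} would render $G$ infinitely representable with every finite set of primes, in particular with $S$, against the hypothesis. Hence $G$ is not representable with $S_p$ for any $p \in P$, which supplies the desired infinite collection of sets $S'$. I expect no real obstacle here: the argument is essentially a one-line application of the contrapositive of Theorem \ref{thm2}, the only inputs being the infinitude of primes above $2m$ and the standing assumption $|G|\geq 3$ built into Theorem \ref{thm2}.
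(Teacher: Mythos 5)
Your proof is correct and takes essentially the same route as the paper: both apply the contrapositive of Theorem \ref{thm2} to the singleton sets $S'=\{p\}$ for the infinitely many large primes $p$. If anything you are slightly more careful than the printed proof, which takes $p$ greater than the number of vertices rather than twice the number of edges as the hypothesis of Theorem \ref{thm2} actually requires, and which exhibits only a single $S'$ while leaving the infinitude implicit.
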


\begin{cor} \label{cor4.2}
Let $G$ with $|G|\geq 3$ be representable with every $S$.
Then $G$ is infinitely representable with every $S$.
\end{cor}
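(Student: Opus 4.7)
The plan is to deduce Corollary \ref{cor4.2} as an essentially immediate consequence of Theorem \ref{thm2}. The point is that the hypothesis ``representable with every $S$'' is much stronger than what Theorem \ref{thm2} requires in order to trigger its conclusion: Theorem \ref{thm2} only asks for representability with a single $S_0$ of the form $\{p\}$, with $p$ a prime exceeding twice the number of edges of $G$. So the strategy is to use the hypothesis to pick such an $S_0$ and then quote Theorem \ref{thm2}.

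Concretely, I would proceed as follows. Let $e$ denote the number of edges of $G$. Since there are infinitely many primes, I can choose a prime $p$ with $p > 2e$, and set $S_0 = \{p\}$. By the hypothesis of the corollary, $G$ is representable with this particular $S_0$. Since moreover $|G| \geq 3$ and $p$ is a prime greater than twice the number of edges of $G$, the hypotheses of Theorem \ref{thm2} are fulfilled. Invoking that theorem, I conclude that $G$ is infinitely representable with every finite set $S$ of primes, which is the desired statement.

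There is essentially no obstacle to overcome: the entire content of the corollary is the observation that assuming representability with \emph{every} $S$ lets us select a set $S_0$ of the very restricted form required by Theorem \ref{thm2}. All of the genuine work — showing that one such representation already forces infinite representability with every $S$ — has been done inside the proof of Theorem \ref{thm2} itself. Thus the write-up should be no longer than a few lines.
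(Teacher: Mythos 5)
Your proposal is correct and is exactly the argument the paper intends: the paper's proof of Corollary \ref{cor4.2} is the one-line remark ``Straightforward consequence of Theorem \ref{thm2}'', and your write-up simply makes explicit the choice of $S_0=\{p\}$ with $p$ exceeding twice the number of edges, which the hypothesis of representability with every $S$ makes available. Nothing further is needed.
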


Our last theorem in this section shows that certain graphs are such that for any $S$, they are either not representable with $S$ or they are infinitely representable with $S$.

\begin{theorem}
\label{thm3.1}
Let $G$ be a graph with $|G|\geq 3$ which is at most simply connected. If $G$ is representable with some $S$, then it is infinitely representable with $S$.
\end{theorem}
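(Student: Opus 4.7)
The plan is to exploit the cut structure of $G$: since $G$ is at most simply connected, there exists a partition $V(G)=V_1\sqcup V_2$ with both parts non-empty and with at most one crossing edge. If $G$ is disconnected one takes $V_1$ to be a single connected component and $V_2$ its complement; if $G$ is connected, removing a bridge $e=\{v,w\}$ yields two pieces $V_1,V_2$. Write the corresponding partition $A=A_1\sqcup A_2$, let $\alpha\in A_1$, $\beta\in A_2$ be the bridge endpoints (if a bridge exists), and for $\gamma\in\Z_S$ set $A^{(\gamma)}:=A_1\cup(A_2+\gamma)$. Since translating $A_2$ does not alter differences inside $A_2$, the induced subgraphs on the two parts are preserved for every $\gamma$, so I only need to control the cross differences and to avoid collisions $A_1\cap(A_2+\gamma)\ne\emptyset$.

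To produce infinitely many valid $\gamma$ I argue as follows. In the disconnected case I need $(a-b)-\gamma\notin\Z_S^*$ for all $(a,b)\in A_1\times A_2$; choosing pairwise distinct primes $p_{ab}\notin S$ and imposing $\gamma\equiv a-b\pmod{p_{ab}}$, the Chinese Remainder Theorem yields an arithmetic progression of $\gamma\in\Z\subseteq\Z_S$ along which every cross difference acquires a prime factor outside $S$ and hence fails to be an $S$-unit. In the bridge case I set $\gamma=(\alpha-\beta)-u$ with $u\in\Z_S^*$, which forces $\alpha-(\beta+\gamma)=u\in\Z_S^*$ and preserves the bridge; I must then exclude those $u$ with $d_{ab}+u\in\Z_S^*$ for some nonzero $d_{ab}:=(a-b)-(\alpha-\beta)$. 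For each fixed $d_{ab}\ne 0$, dividing by $d_{ab}$ turns this into the $S$-unit equation $X+Y=1$ over $\Z_T^*$ with $T:=S\cup\{\text{primes dividing }d_{ab}\}$, which has only finitely many solutions by Evertse's theorem. Hence only finitely many $u\in\Z_S^*$ are forbidden; since $\Z_S^*$ is infinite, infinitely many admissible $u$ (and hence $\gamma$) remain after discarding the finitely many $\gamma$ that would force a collision.

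Next I must turn infinitely many valid $\gamma$ into infinitely many non-$S$-equivalent $A^{(\gamma)}$. Suppose $A^{(\gamma')}=u_0 A^{(\gamma)}+c_0$ with $u_0\in\Z_S^*$, $c_0\in\Z_S$; the map $\phi(x):=u_0 x+c_0$ is a graph isomorphism $\G_S(A^{(\gamma)})\to\G_S(A^{(\gamma')})$ and therefore respects the connected-component (or bridge-piece) structure. Consequently $\phi$ either sends $A_1$ onto $A_1$, or else maps $A_1$ onto a subset of $A_2+\gamma'$ isomorphic to $A_1$ as an induced subgraph. In either case $u_0$ equals a quotient of two specific differences drawn from the finite sets $A_1$ and $A_2$, so the pair $(u_0,c_0)$ lies in a finite, $\gamma$-independent set; for each such pair, the remaining condition determines $\gamma'$ from $\gamma$ by a fixed affine rule. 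Hence every $\gamma$ has a finite equivalence fiber, and the infinitely many admissible $\gamma$ from the previous step must fall into infinitely many $S$-equivalence classes.

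The main obstacle is the bridge case: preserving exactly one cross $S$-unit difference while killing all the others forces one to invoke Evertse-type finiteness for $S$-unit equations, as elementary residue-class arguments cannot simultaneously arrange a designated difference to be an $S$-unit and all the remaining ones not to be. The bookkeeping in the final step is slightly delicate when $|A_1|=1$ or when $A_1$ and $A_2$ have matching induced subgraphs (so the ``swap'' sub-case really occurs), but the group of $S$-affine self-maps of a finite subset of $\Z_S$ is always finite, so the finite-fiber conclusion survives in every sub-case.
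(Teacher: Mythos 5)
Your proposal is correct and follows essentially the same route as the paper: translate one side of the bridge by $\gamma=(\alpha-\beta)-u$ with $u\in\Z_S^*$ and use the finiteness of solutions of the $S$-unit equation $v-u=d_{ab}$ (Theorem A) to destroy every cross difference except the bridge, where the needed fact that each $d_{ab}\neq 0$ is exactly the paper's observation that a vanishing $d_{ab}$ would produce a second crossing edge. Your explicit verification that the resulting sets fall into infinitely many $S$-equivalence classes (via finiteness of the affine symmetries of a finite set with at least two elements) is a welcome completion of a step the paper leaves implicit.
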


\section{$\triangle$-connectedness} \label{s-tri}

For a graph $G$ we denote by $G^\triangle$ the triangle graph (or $\triangle$-graph) of $G$, i.e. the graph whose vertices are the edges of $G$, and two vertices $e_1$ and $e_2$ of $G^\triangle$ are connected by an edge if and only if $G$ contains a triangle having $e_1$ and $e_2$ as edges. Further, if $G$ and $G^\triangle$ are connected then we say that $G$ is $\triangle$-connected. Figure 1 shows some examples. The $\triangle$-graph of tree and forest graphs have only isolated vertices. The third graph of Figure 1 is doubly connected, but not $\triangle$-connected.

\begin{figure}[htp!]
\label{fig1}
\begin{center}
   \includegraphics[scale=0.6]{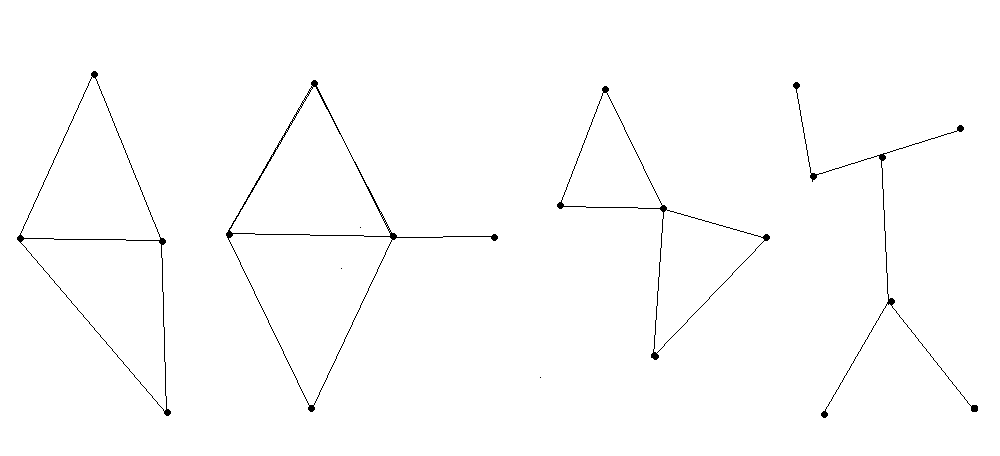}
\caption{The first graph is $\triangle$-connected, while the last three ones are not $\triangle$-connected.}
\end{center}
\end{figure}

We note that if a graph $\G_S(A)$ contains a triangle then there are exceptional $S$-units in $\Z_S$, i.e. $S$-units $u$ such that $1-u$ is also $S$-unit. Such units do not always exist. E.g. if the primes in $S$ are all odd, then it is easy to see that $\Z_S$ has no exceptional $S$-units. Consequently, the corresponding graphs $\G_S(A)$ cannot have triangles. On the other hand, we recall that for a given graph $G$ there are infinitely many pairs $(S,A)$ for which $\G_S(A)$ is isomorphic to $G$.

The following theorem is a partial counterpart of Theorem \ref{thm3.1}. Observe that if both $G$ and $G^\triangle$ are connected, then $G$ is doubly connected.

\begin{theorem}
\label{thm3.3}
Let $G$ be a graph of order $\geq 3$ such that both $G$ and $G^\triangle$ are connected. Then $G$ is finitely representable with every $S$.
\end{theorem}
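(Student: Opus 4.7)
The plan is to bootstrap from the basic finiteness of solutions to the $S$-unit equation $X+Y=1$ in $S$-units (Mahler's theorem), using the hypothesis on $G^\triangle$ to propagate through the whole graph. Assume $\G_S(A)\cong G$ via some isomorphism $\phi$ for some $A\subset \Z_S$; otherwise there is nothing to prove. Since $G$ is connected on at least three vertices, it has at least two edges, so $G^\triangle$ has at least two vertices; combined with connectedness, this forces $G^\triangle$ to have an edge, which means $G$ contains a triangle $T_0=\{v_1,v_2,v_3\}$.

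Using the two parameters in an $S$-equivalence, we may replace $A$ by an $S$-equivalent set so that $\phi(v_1)=0$ and $\phi(v_2)=1$; this is legitimate because $\phi(v_2)-\phi(v_1)\in\Z_S^*$. Then $x:=\phi(v_3)$ satisfies $x,\,1-x\in\Z_S^*$, and Mahler's theorem on the $S$-unit equation gives only finitely many such $x$. Thus the triangle $T_0$ is mapped into one of finitely many coordinate configurations.

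The heart of the argument is an induction on distance in $G^\triangle$ from the edges of $T_0$. If two edges $e=\{a,b\}$ and $f$ of $G$ are adjacent in $G^\triangle$, they lie on a common triangle and share a vertex in $G$; say $f=\{b,c\}$ with $\{a,b,c\}$ the common triangle. Assuming inductively that $\phi(a)$ and $\phi(b)$ each range over a finite set, the element $\phi(c)$ is constrained by $\phi(c)-\phi(a),\,\phi(c)-\phi(b)\in\Z_S^*$. Dividing the identity $(\phi(c)-\phi(a))-(\phi(c)-\phi(b))=\phi(b)-\phi(a)$ by the $S$-unit $\phi(b)-\phi(a)$ yields an $S$-unit equation of the form $U-V=1$, so Mahler's theorem forces $\phi(c)$ into finitely many values. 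By the connectedness of $G^\triangle$, this propagation reaches every edge of $G$, and since every vertex of $G$ belongs to some edge, every $\phi(v)$ ranges over a finite set. Consequently only finitely many normalized $A$ can represent $G$, hence only finitely many $S$-equivalence classes. The main obstacle is the careful bookkeeping in the induction: at each step one must pivot on the already-determined edge and verify that the shared-vertex structure of adjacent $G^\triangle$-vertices is exploited consistently; the number-theoretic content is a single, repeatedly applied, use of Mahler's $S$-unit theorem.
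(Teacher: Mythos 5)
Your proof is correct, but it follows a genuinely different and more self-contained route than the paper. The paper deduces Theorem \ref{thm3.3} from the quantitative Theorem \ref{thm3.8}, which rests on Theorem \ref{thm5.4}, itself obtained by combining a deep structural result of Gy\H{o}ry (Theorem D, classifying the possible component structures of $\overline{\G_S(A)}$, $\G_S(A)$ and $\G_S(A)^\triangle$ for all but boundedly many $S$-equivalence classes) with Lemma \ref{lem5.1}. You instead use only the finiteness of solutions of the two-term $S$-unit equation: you locate a triangle (which exists since $G^\triangle$ is connected with at least two vertices), spend the two degrees of freedom of $S$-equivalence to normalize two of its vertices to $0$ and $1$, pin down the apex by the equation $x-(x-1)=1$, and then propagate finiteness along $G^\triangle$, each step being another instance of $U-V=1$ after dividing by the $S$-unit attached to the already-determined edge. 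All steps check out: adjacent vertices of $G^\triangle$ lie on a common triangle whose third vertex is thereby confined to finitely many values, connectedness of $G^\triangle$ reaches every edge, and connectedness of $G$ ensures every vertex lies on an edge, so only finitely many normalized sets $A$, hence finitely many classes, occur. What the paper's route buys is the explicit bound of Theorem \ref{thm3.8}, uniform in $|G|$ and $|S|$, together with the finer classification of Theorem \ref{thm5.4} needed elsewhere (e.g.\ for Theorem \ref{thm3.4} and Theorem \ref{prop3.6}); what yours buys is a short, elementary proof of Theorem \ref{thm3.3} alone, which moreover becomes quantitative (on the order of $\left(3\cdot 7^{2|S|+3}\right)^{|G|-2}$ normalized sets) if one invokes Evertse's bound (Theorem A) at each propagation step. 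Your propagation is in fact close in spirit to the paper's own proof of the more general Theorem \ref{thm3.4}, which glues finitely representable pieces along shared vertices.
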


Note that cyclic graphs $C_n$ with $n>3$ and bipartite graphs $K_{m,n}$ with $m>1,n>1$ are doubly connected, but not $\triangle$-connected. According to Theorem \ref{prop3.6} $C_{2n}$ $(n\geq 2)$ and $K_{2,2}$ are infinitely representable with every $S$, but $C_3$, $C_5$ and $K_{m,n}$ for $m>n>1$ or $m=n\geq 3$ are finitely representable with every $S$. Thus some doubly connected graphs which are not $\triangle$-connected are infinitely representable with every $S$ and some others are finitely representable with every $S$.

Theorem \ref{thm3.3} can be generalized in the following way.
We denote by $\mH(G)$ the graph whose vertices are the $\triangle$-connected components of $G$, and two vertices of $\mH(G)$ are connected if the corresponding $\triangle$-connected components of $G$ have at least two vertices in common in $G$. This graph $\mH(G)$ will be called the $\mH(G)$-graph of $G$. Figure 2 shows an example.

\begin{figure}[htp!]
\label{fig2}
\begin{center}
   \includegraphics[scale=0.6]{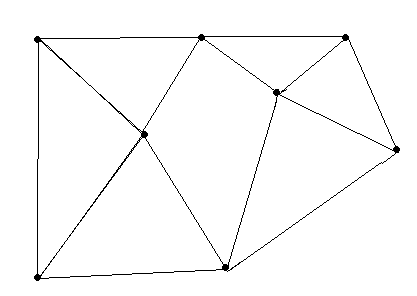}
\caption{A graph $G$ whose $\triangle$-hypergraph is not connected, but for which ${\mH}(G)$ is connected. In this case ${\mH}(G)$ consists of two vertices connected by an edge.}
\end{center}
\end{figure}

\begin{theorem}
\label{thm3.4}
Let $G$ be a graph of order $\geq 3$. Suppose that both $G$ and $\mH(G)$ are connected. Then $G$ is finitely representable with every $S$.
\end{theorem}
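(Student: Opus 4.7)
The plan is to chain the propagation argument underlying Theorem~\ref{thm3.3} across the $\triangle$-connected components of $G$, using the connectedness of $\mH(G)$ as a roadmap. Let $C_1,\dots,C_k$ denote the $\triangle$-connected components of $G$. First I would rule out the degenerate case in which every $C_i$ has order $\leq 2$: then every $C_i$ would be a single edge, any two distinct such components would share at most one vertex, and since $|G|\geq 3$ and $G$ is connected there are at least two edges, so $\mH(G)$ would be edgeless on at least two vertices, contradicting the hypothesis. Hence some component, say $C_1$, has order $\geq 3$. Order the remaining components by a breadth-first search in $\mH(G)$ rooted at $C_1$, so that each $C_j$ ($j\geq 2$) shares at least two vertices with some earlier $C_i$.

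Now suppose $\G_S(A)\cong G$ under a bijection $\phi:V(G)\to A$. By the argument underlying Theorem~\ref{thm3.3}, applied to $C_1$, there are only finitely many $S$-equivalence classes of embeddings $V(C_1)\to\Z_S$ along which every edge of $C_1$ becomes an $S$-unit difference. Using the $S$-equivalence freedom once, I would fix $\phi|_{V(C_1)}$ to be one of these finitely many concrete embeddings. Then I would process the remaining components in BFS order: when $C_j$ is treated, pick two distinct vertices $x,y$ in $V(C_j)\cap V(C_i)$ for some already-processed $C_i$, so that $\phi(x)$ and $\phi(y)$ are already fixed. The same argument applied to $C_j$ produces finitely many $S$-equivalence classes of candidate embeddings $V(C_j)\to\Z_S$; in each class with representative $\phi_0$, the equations $u\phi_0(x)+b=\phi(x)$ and $u\phi_0(y)+b=\phi(y)$ determine $(u,b)\in\Z_S^*\times\Z_S$ uniquely (because $x\neq y$ forces $\phi_0(x)\neq\phi_0(y)$), so at most finitely many extensions of $\phi$ to $V(C_j)$ are compatible with the already-fixed values at $x$ and $y$. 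Multiplying the finite number of choices over the $k$ components yields finitely many $A$ up to $S$-equivalence.

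The main obstacle lies in the phrase ``by the argument underlying Theorem~\ref{thm3.3}'' used above. In a representation of $G$, the set $\phi(V(C_j))$ induces the full subgraph $G[V(C_j)]$, which may be strictly larger than $C_j$ whenever two $\triangle$-components share two vertices through an edge belonging to a third component. Hence one cannot invoke Theorem~\ref{thm3.3} verbatim, since we do not necessarily have $\G_S(\phi(V(C_j)))\cong C_j$. What is actually needed is a weaker variant: the finiteness, up to $S$-equivalence, of embeddings of a fixed $\triangle$-connected graph into $\Z_S$ that send its edges into $S$-units, with no constraint on the remaining differences. This follows from exactly the same Thue-Siegel-Roth-Schmidt-type propagation via $S$-unit equations that drives Theorem~\ref{thm3.3}, since an extra $S$-unit difference is an additional constraint, never a new source of freedom. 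Beyond this, only bookkeeping remains: at each BFS step one must check that the candidate $u$ lies in $\Z_S^*$ and that the resulting $\phi|_{V(C_j)}$ is consistent with $\phi$ on any vertex that $C_j$ shares with more than one previously processed component; branches failing these checks are discarded, and the surviving total remains finite.
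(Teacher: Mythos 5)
Your proposal is correct and follows essentially the same route as the paper's proof: establish finiteness up to $S$-equivalence for each $\triangle$-connected component, use a pair of shared vertices to pin down the relative $S$-unit scaling (your unique $(u,b)$ is the paper's $w=ut$), and chain the components together by induction along the connected graph $\mH(G)$. Your explicit treatment of the two points the paper glosses over --- that $G[V(C_j)]$ may properly contain $C_j$, so one needs the edge-only variant of the propagation lemma rather than Theorem~\ref{thm3.3} verbatim, and that the hypotheses force some anchor component of order $\geq 3$ --- is a genuine improvement in rigor, not a deviation in method.
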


\noindent If $G^\triangle$ is connected, then $\mH(G)$ consists of one vertex and is therefore also connected. Hence Theorem \ref{thm3.3} is a special case of Theorem \ref{thm3.4}.

The third graph of Figure 1 is an example of a doubly connected graph $G$ which is infinitely representable with some $S$. Here $\mH(G)$ consists of two isolated vertices. By Theorem \ref{thm3.4} the graph of Figure 2 is finitely representable with all $S$.

Theorems \ref{thm3.3} and \ref{thm3.4} are applications of the following theorem. We denote the complement of $G$ by $\overline{G}$.

\begin{theorem}
\label{thm5.4}
Let $k\geq 3$ be an integer, and fix $S$. Then for all but at most
$$
\left(k\cdot 5^{162(3|S|+4)}\right)^{4(k-1)}
$$
$S$-equivalence classes of ordered $k$-term subsets $A$ from $\Z_S$, one of the following cases holds:

\vskip.2truecm

\noindent
{\rm i)} $\overline{\G_S(A)}$ is connected and at least one of $\G_S(A)$ and $\G_S(A)^\triangle$ is not connected;

\vskip.2truecm

\noindent
{\rm ii)} $\overline{\G_S(A)}$ has exactly two components, $\overline{\G_1}$ and $\overline{\G_2}$, say, such that $|\overline{\G_1}|=1$, and $\G_2$ is not connected;

\vskip.2truecm

\noindent
{\rm iii)} $k=4$ and $\G_S(A)=K_{2,2}$.
\end{theorem}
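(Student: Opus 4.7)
The plan is to use $S$-equivalence to normalize $A$, and then do a case analysis on the structure of $\overline{\G_S(A)}$, reducing each admissible case (one where none of (i), (ii), (iii) holds) to a bounded sequence of $S$-unit equations. First I would translate $A$ so that $a_1=0$, and, whenever $\G_S(A)$ has at least one edge, pick a distinguished edge $\{a_p,a_q\}$ and scale by $(a_q-a_p)^{-1}\in\Z_S^*$ to arrange $a_q-a_p=1$; the at most $k(k-1)$ choices of the ordered pair $(p,q)$ contribute a polynomial factor to the final count. If $\G_S(A)$ has no edge then $\overline{\G_S(A)}$ is complete and $\G_S(A)^\triangle$ is empty, so (i) holds trivially, and I may assume a distinguished edge exists.

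Next I split on the number $m$ of components of $\overline{\G_S(A)}$. When $m=1$ and (i) fails, both $\G_S(A)$ and $\G_S(A)^\triangle$ are connected, and I would take a spanning tree $T$ of $\G_S(A)^\triangle$ rooted at the distinguished edge $\{0,1\}$: each subsequent $T$-vertex is an edge of $\G_S(A)$ sharing a triangle with a previously placed edge, hence it introduces at most one new element of $A$ which, together with two already-determined vertices, satisfies a two-term $S$-unit equation $x+y=1$ with Evertse-boundedly many solutions. After $\le k-2$ propagation steps the whole of $A$ is determined. When $m=2$ with one component a singleton $\{v\}$ and (ii) fails, I set $v=0$ so that every other vertex of $A$ is an $S$-unit, fix one of them at $1$, and propagate along a spanning tree of $\G_2$: each new vertex $u'$ adjacent (in $\G_2$) to a known $u$ gives the exceptional $S$-unit conditions $u'/u,\,u'/u-1\in\Z_S^*$, again producing only boundedly many values.

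The remaining situation is $m\ge 3$, or $m=2$ with both components non-singleton and $\G_S(A)\ne K_{2,2}$. When $m\ge 3$, any three vertices chosen one from each of three distinct components form a triangle in $\G_S(A)$, so triangles are abundant and the triangle-based propagation applies after normalization. When $m=2$ with both $|V_1|,|V_2|\ge 2$, I split further on whether $\G_S(A)$ contains an edge internal to some $V_i$: if not, then $\G_S(A)=K_{|V_1|,|V_2|}$ with $(|V_1|,|V_2|)\ne(2,2)$ and the bound follows from Theorem~\ref{prop3.6}(ii); if yes, then that internal edge together with any vertex of the opposite component yields a triangle, and an Evertse--Schlickewei--Schmidt bound on 3-term $S$-unit equations (the source of the constant $5^{162(3|S|+4)}$ in the statement) gives boundedly many values for each new coordinate. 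Combining the $k(k-1)$ normalization factor with at most $4(k-1)$ equation-solving steps, the factor $4$ absorbing the worst-case number of auxiliary equations needed per new coordinate across the subcases, produces the claimed bound.

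The main obstacle will be the mixed $m=2$ case: one must ensure that, even when the two large components of $\overline{\G_S(A)}$ are sparsely connected internally, the auxiliary vertices chosen at each propagation step really do produce independent $S$-unit equations of degree at most three, and that the bookkeeping yields only $4(k-1)$ equations in total rather than something depending on $|E(G)|$ or $k^2$. Setting up that uniform propagation scheme and invoking the correct higher-order $S$-unit equation bound from the Evertse--Schlickewei--Schmidt machinery is the most delicate step.
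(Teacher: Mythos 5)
Your case analysis reproduces, essentially correctly, the content of Theorem D (the cases where $\overline{\G_S(A)}$ is connected, has a singleton component, or has at least three components), whereas the paper simply quotes Theorem D from \cite{gy2008} and only proves the genuinely new case, namely $\overline{\G_S(A)}$ having exactly two components both of order $\geq 2$ (Lemma \ref{lem5.1}). It is precisely on this new case that your argument has a real gap. When neither part contains an internal edge, so that $\G_S(A)=K_{m,n}$ with, say, $m\geq 3$ and $n\geq 2$, you appeal to Theorem \ref{prop3.6} ii); but in the paper that statement is itself deduced from Theorem \ref{thm5.4} (the proof of Theorem \ref{prop3.6} ii) says so explicitly, and the authors even remark that part ii) is not used in proving Theorem \ref{thm5.4}), so your argument is circular at exactly the point where new input is required. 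Moreover, Theorem \ref{prop3.6} ii) is only a qualitative finiteness statement and could not deliver the explicit bound $\left(k\cdot 5^{162(3|S|+4)}\right)^{4(k-1)}$ even if it were legitimately available.

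The missing idea is the paper's Lemma \ref{lem5.3}: up to an $S$-unit factor there are at most $24^{324(3|S|+4)}$ values of $c\in\Q^*$ for which $x+y=c$ has more than two solutions in $S$-units; this is where the three-term unit equation bound (Theorem B) actually enters. In the $K_{m,n}$ configuration with $m\geq 3$ one writes $a_{m+1}-a_{m+2}=(a_{m+1}-a_i)+(a_i-a_{m+2})$ for $i=1,2,3$, obtaining three representations of a fixed non-unit as a sum of two $S$-units, at least two of which are not swaps of each other; Lemma \ref{lem5.3} then pins down $a_{m+1}-a_{m+2}$ up to a unit, one divides by that unit, and Theorem A finishes the propagation. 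Your deployment of the three-term bound in the subcase where an internal edge \emph{does} exist is, by contrast, unnecessary: there a cherry with both endpoints already determined yields a two-term equation $x-y=d$ with $d\neq 0$ fixed, and Theorem A suffices. So the three-term machinery is invoked in the wrong subcase, and the subcase that actually needs it is left unproved. The remaining branches of your proposal (connected complement with $\G_S(A)$ and $\G_S(A)^\triangle$ connected, singleton complement component, at least three complement components, and two components with an internal edge) are sound modulo routine bookkeeping of the constants.
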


\noindent As is pointed out in \cite{gy1980}, Section \ref{s-b}, each of the cases i), ii), iii) may occur. Moreover, for each of i), ii), iii), one can choose $S$ such that there are infinitely many $S$-equivalence classes of ordered $k$-term subsets $A$ in $\Z_S$ with the above property.

The following consequence of Theorem \ref{thm5.4} is a quantitative refinement of Theorem \ref{thm3.3}.

\begin{theorem}
\label{thm3.8}
Let $G$ be a graph of order $k\geq 3$ and $S$ a finite set of prime numbers. Suppose that $G$ is isomorphic to $G_S(A)$ for more than
$$
\left(k\cdot 5^{162(3|S|+4)}\right)^{4(k-1)}
$$
$S$-equivalence classes of ordered subsets $A$ from $\Z_S$.  Then at least one of $G$ and $G^\triangle$ is not connected.
\end{theorem}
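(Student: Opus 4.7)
The plan is to derive Theorem~\ref{thm3.8} by contrapositive from Theorem~\ref{thm5.4}. Let $N := (k\cdot 5^{162(3|S|+4)})^{4(k-1)}$ denote the common bound, and assume $G \cong \G_S(A)$ for more than $N$ pairwise non-$S$-equivalent ordered $k$-tuples $A$ in $\Z_S$. By Theorem~\ref{thm5.4}, at most $N$ classes of ordered $k$-tuples fail to fall into case (i), (ii) or (iii), so at least one of our representations $A$ satisfies one of these cases. Each of (i), (ii), (iii) is an isomorphism invariant of $\G_S(A)$ (they are stated purely in terms of connectedness, complement and triangle graph), so the iso-type of $G$ itself satisfies whichever case applies. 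It therefore suffices to verify that each of (i), (ii), (iii) forces at least one of $G$, $G^\triangle$ to be disconnected.

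Case (i) is immediate from its statement. Case (iii) is essentially trivial: $K_{2,2}$ is bipartite, hence triangle-free, so $G^\triangle$ is a four-vertex edgeless graph, which is disconnected.

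The main work is case (ii). Here the singleton component $\overline{\G_1}$ of $\overline{G}$ identifies a vertex $v\in V(G)$ that is adjacent in $G$ to every other vertex, while by hypothesis the induced subgraph $\G_2 = G-v$ is disconnected. I would show that $G^\triangle$ is disconnected via the observation that every triangle of $G$ is supported on $\{v\}\cup C$ for a single component $C$ of $G-v$: a triangle $\{v,a,b\}$ forces $a\sim b$ in $G-v$, so $a,b$ lie in a common component; and a triangle $\{a,b,c\}$ avoiding $v$ already sits inside one component, its vertices being pairwise adjacent in $G-v$. Consequently the edge set of $G$ partitions into blocks indexed by the components of $G-v$, the block of $C$ consisting of all edges inside $C$ together with the edges $\{v,a\}$, $a\in C$; no triangle of $G$ uses edges from two different blocks. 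Hence $G^\triangle$ is a vertex-disjoint union of subgraphs indexed by the components of $G-v$, and since there are at least two such components, $G^\triangle$ is disconnected.

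The main obstacle is this combinatorial verification in case (ii); once it is in hand, Theorem~\ref{thm3.8} follows immediately from the contrapositive of Theorem~\ref{thm5.4}.
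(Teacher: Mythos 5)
Your proposal is correct and follows the same route as the paper: apply Theorem~\ref{thm5.4} to find a non-exceptional representing class, then check that each of cases i)--iii) forces one of $G$, $G^\triangle$ to be disconnected. The paper merely asserts the disconnectedness of $\G_S(A)^\triangle$ in cases ii) and iii) as an observation, whereas you supply the (correct) verification via the block decomposition of the edge set around the dominating vertex $v$; this is a welcome elaboration, not a different method.
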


\vskip.2truecm

\noindent{\bf Question 1.} Does there exist a criterion/algorithm to decide the infinite representability of a graph $G$ for fixed $S$?

\vskip.2truecm

Finally, the following result is concerned with the situation where no representation is possible.

\begin{theorem}
\label{thm3.6}
Let $G$ be a graph of order $k$ such that $\overline{G}$ has either at least three components, or two components of order $\geq 2$. If
$$
k>3\cdot 2^{16(|S|+1)}
$$
then $G$ is not representable with any $S$.
\end{theorem}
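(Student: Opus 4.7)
The plan is to argue by contradiction: suppose $G \cong \G_S(A)$ for some $A \subset \Z_S$ with $|A| = k > 3 \cdot 2^{16(|S|+1)}$. The key ingredient is the Beukers--Schlickewei bound, which asserts that the $S$-unit equation $x + y = 1$ has at most $2^{16(|S|+1)}$ solutions in $x, y \in \Z_S^*$. From this I extract a \emph{triangle-counting lemma}: for any pair $p, q \in A$ with $p - q \in \Z_S^*$, the number of $v \in A \setminus \{p, q\}$ such that $\{v, p, q\}$ is a triangle in $\G_S(A)$ is at most $2^{16(|S|+1)}$; indeed, each such $v$ yields a distinct pair $(x, y) = \bigl((v - q)/(p - q),\ (p - v)/(p - q)\bigr)$ solving $x + y = 1$.

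\textbf{Case 1: $\overline{G}$ has at least three components.} I pick $t_i \in C_i$ for three distinct components $C_i$. Since vertices in distinct components of $\overline{G}$ are automatically adjacent in $G$, the set $\{t_1, t_2, t_3\}$ is a triangle. For each pair $(i, j)$, let $T_{ij} = \{v \in A \setminus \{t_i, t_j\} : \{v, t_i, t_j\}\text{ is a triangle in } G\}$; the lemma gives $|T_{ij}| \leq 2^{16(|S|+1)}$. Every $v \in A$ lies in at least one $T_{ij}$: if $v = t_k$ take $\{i, j\} = \{1, 2, 3\} \setminus \{k\}$; if $v \in C_\ell$ with $\ell \in \{1, 2, 3\}$ take $\{i, j\} = \{1, 2, 3\} \setminus \{\ell\}$ (so $v$ lies in different components of $\overline{G}$ from both $t_i$ and $t_j$); if $v \in C_\ell$ with $\ell \notin \{1, 2, 3\}$, then $v$ lies in all three sets. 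Summing yields $k \leq \sum_{i<j} |T_{ij}| \leq 3 \cdot 2^{16(|S|+1)}$, contradicting the hypothesis.

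\textbf{Case 2: $\overline{G}$ has exactly two components $C_1, C_2$ with $|C_1|, |C_2| \geq 2$.} If $G[C_1]$ contains an edge $a_1 a_2$, then $\{a_1, a_2, b\}$ is a triangle for every $b \in C_2$, so the lemma gives $|C_2| \leq 2^{16(|S|+1)}$; symmetrically for an edge in $G[C_2]$. When both $G[C_i]$ have an edge, we obtain $k \leq 2 \cdot 2^{16(|S|+1)}$, a contradiction. The remaining and hardest situation is when one of $G[C_i]$ is edgeless, the extreme case being $G = K_{m,n}$, which is triangle-free. Here I normalize $A$ by $S$-equivalence so that $0 \in C_1$ and $-1 \in C_2$; then each pair $(a, b) \in (C_1 \setminus \{0\}) \times (C_2 \setminus \{-1\})$ produces $S$-units $u = a + 1$, $v = -b$, $w = a - b$ satisfying $u + v - w = 1$, and distinct pairs yield distinct triples. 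After isolating the only relevant degenerate family $u + v = 0$ (which forces $b = a + 1$ and so contributes at most $\min(m, n)$ pairs), one can invoke a bound on nondegenerate solutions of the three-term $S$-unit equation and exploit the symmetric roles of $m$ and $n$ to force $m + n \leq 3 \cdot 2^{16(|S|+1)}$. The principal obstacle is precisely this $K_{m,n}$ (and near-$K_{m,n}$) subcase: the triangle-counting device collapses, and one must squeeze a bound matching the two-term Beukers--Schlickewei threshold out of the three-term $S$-unit equation.
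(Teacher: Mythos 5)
Your strategy is self-contained and genuinely different from the paper's, which disposes of this theorem in one line by citing Theorem C (Győry 2008): for $k$ above the threshold, $\overline{\G_S(A)}$ automatically has at most two components, one of order at most $1$, which is incompatible with the hypothesis on $\overline{G}$. Within your approach, Case 1 and the sub-case of Case 2 in which both $G[C_1]$ and $G[C_2]$ contain an edge are correct; your constant $2^{16(|S|+1)}$ for $x+y=1$ is the Beukers--Schlickewei bound, and the paper's Theorem A would give the even smaller $3\cdot 7^{2|S|+3}$, so the triangle-counting lemma is fine.

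The gap is in your remaining sub-case, and it is genuine, as you yourself half-concede. The available bound for nondegenerate solutions of the three-term equation $x_1+x_2+x_3=1$ (Theorem B, Amoroso--Viada) is $24^{324(3|S|+4)}$, astronomically larger than $3\cdot 2^{16(|S|+1)}$; no symmetry between $m$ and $n$ can ``squeeze'' the two-term threshold out of it, so your plan to force $m+n\le 3\cdot 2^{16(|S|+1)}$ cannot be completed as written. The three-term equation is the wrong tool. The right one is the two-term equation with \emph{non-unit coefficients}, which is exactly the generality Theorem A provides: fix distinct $a,a'\in C_1$ (they exist since $|C_1|\ge 2$) and put $c=a-a'\neq 0$. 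For every $v\in C_2$, both $a-v$ and $a'-v$ are $S$-units (they lie in different components of $\overline{G}$ from $v$), and
$$
\frac{1}{c}\,(a-v)-\frac{1}{c}\,(a'-v)=1 ,
$$
so each $v$ yields a distinct solution $(x,y)=(a-v,\,a'-v)$ of $\frac{1}{c}x-\frac{1}{c}y=1$ in $x,y\in\Z_S^*$; hence $|C_2|\le 3\cdot 7^{2|S|+3}$, and symmetrically $|C_1|\le 3\cdot 7^{2|S|+3}$ using two distinct elements of $C_2$. Since $6\cdot 7^{2|S|+3}<3\cdot 2^{16(|S|+1)}$, this settles all of Case 2 at once --- no normalization, no splitting according to whether the $G[C_i]$ are edgeless, and no degenerate-subsum analysis needed.
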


\vskip.2truecm

\noindent{\bf Question 2.} Does there exist a criterion/algorithm to decide the representability of a graph $G$ for any given $S$?

\section{Graphs which are representable with all $S$} \label{s-i}

Theorem \ref{prop3.6} states that $G=C_{2n}$ for $n\geq 2$ and $G=K_{2,2}$ are representable with all $S$. We want to study such graphs. It follows from Corollary \ref{cor4.2} that if $G$ with $|G|\geq 3$ is representable with all $S$, then it is infinitely representable with all $S$.

The $n$-cube $Q_n$ is defined as the graph of which the vertices are $n$-tuples with coordinates 0 and 1 and in which two vertices are connected by an edge if and only if the vertices differ in exactly one coordinate. Hence $Q_n$ has $2^n$ vertices and $n2^{n-1}$ edges. An embedding of a graph $G$ into $Q_n$ is an injective mapping of the vertices of $G$ into the vertices of $Q_n$ which maps the edges of $G$ into edges of $Q_n$. A graph which can be embedded in $Q_n$ for some $n$ is called cubical. Obviously, a cubical graph is bipartite. The converse is not true; the graph $K_{2,3}$ is an example of a bipartite graph which is not cubical. All trees are cubical \cite{firs}.

Several authors have published results on cubical graphs. We cite the ones which are the most relevant for us. For more details we refer to the survey paper  \cite{hhw}.

Garey and Graham \cite{gg} call a graph $G$ critical if it is not cubical and every proper subgraph $H$ of $G$ is cubical. It is clear that any odd cycle $C_{2n+1}$ is critical. The smallest bipartite graph which is critical is the bipartite graph $K_{2,3}$. They show that the number of non-isomorphic critical graphs on $n$ vertices is exponential in $n$. Garey and Graham as well as Gorbatov and Kazanskiy \cite{gk} have given procedures for constructing critical graphs from smaller critical graphs.

Havel and Moravek \cite{hm} found a criterion for a graph $G$ to be cubical based on so-called $c$-valuations. A $c$-valuation of a bipartite graph $G$ is a labeling of the edges of $G$ such that
\begin{itemize}
\item for each cycle in $G$, all distinct edge labels occur an even number of times;
\item for each (noncyclic) path in $G$, there exists at least one edge label which occurs an odd number of times.
\end{itemize}
\noindent
The dimension of a $c$-valuation is the number of edge labels used. It is shown in \cite{hm} that a graph $G$ is cubical with $G \subseteq Q_n$ if and only if there exists a $c$-valuation of $G$ of dimension $n$.
Intuitively, the labels of the edges correspond with the directions of the edges in an $n$-cube embedding of $G$.

Afrati {\it et al.} \cite{app} have shown that telling whether a graph is cubical is NP-complete.

We shall prove the following equivalence.

\begin{theorem} \label{thmcub}
A graph $G$ is representable with all $S$ if and only if $G$ is cubical.
\end{theorem}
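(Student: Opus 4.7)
I would split the proof into the two implications.

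\emph{Cubical implies representable with every $S$.} Suppose $G$ is cubical, so its vertices can be identified with a subset $V\subseteq\{0,1\}^n$ such that the edges of $G$ are precisely the pairs $\{v,w\}\subseteq V$ at Hamming distance $1$. Given any finite non-empty set of primes $S$, the plan is to pick $u_1,\dots,u_n\in\Z_S^*$ that are ``generic'' in the sense that for every $\varepsilon\in\{-1,0,1\}^n$ with $|\mathrm{supp}(\varepsilon)|\geq 2$, the sum $\sum_{i=1}^n\varepsilon_i u_i$ is neither zero nor an element of $\Z_S^*$. Setting $a_v=\sum_i v_i u_i$ for each $v\in V$, the differences $a_v-a_w$ then lie in $\Z_S^*$ iff $v$ and $w$ differ in exactly one coordinate, i.e.\ iff $\{v,w\}\in E(G)$, giving $\G_S(A)\cong G$ on $A=\{a_v\}$. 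To find such $u_i$ I would induct on $n$: assuming $u_1,\dots,u_{n-1}$ already chosen, the set of ``bad'' $u_n\in\Z_S^*$ (those violating some condition above) is finite, because the $S$-unit equation $x-y=c$ has, for any fixed nonzero $c\in\Z_S$, only finitely many solutions in $S$-units by the theorem of Mahler--Evertse--van der Poorten--Schlickewei; since $\Z_S^*$ is infinite, a valid $u_n$ remains.

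\emph{Representable with every $S$ implies cubical.} I would apply the hypothesis at $S=\{p\}$ with $p$ an odd prime larger than $2|E(G)|$, and fix a representation $\G_{\{p\}}(A)\cong G$ with $A\subseteq\Z$ (clearing denominators). Each edge $\{a_i,a_j\}$ has $a_i-a_j=\pm p^{e_{ij}}$, and I would label it by $\ell(\{a_i,a_j\})=e_{ij}$. The strategy is to show that $\ell$ is a $c$-valuation, so that by the Havel--Mor\'avek criterion $G$ embeds in some $Q_n$ and is cubical. The cycle condition (each label appears an even number of times on every cycle) follows from the identity $\sum_j\sigma_j p^{e_j}=0$ around a cycle by a $p$-adic induction: at the minimum exponent $e^\ast$, reduction shows $p\mid\sum_{j:\,e_j=e^\ast}\sigma_j$, and the absolute value of this sum is at most the cycle length, hence less than $p$, forcing it to be $0$ and the edges with label $e^\ast$ to pair into $+/-$ in even number; the argument continues on the next smallest exponent. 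In particular every cycle has even length, so $G$ is bipartite. The path condition is immediate when the endpoints $u,v$ of the path are adjacent in $G$: augmenting by the edge $\{u,v\}$ produces a cycle on which the cycle condition forces $\ell(\{u,v\})$ to have odd multiplicity on the original path.

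The main obstacle is the path condition for \emph{non-adjacent} endpoints $u,v$, because even multiplicity of every label on some $u$--$v$ path translates to $a_u-a_v=\sum_\ell p^\ell S_\ell$ with all $S_\ell$ even integers, which does not on its own contradict $a_u\neq a_v$. My plan is to resolve this by exploiting that a representation is available for \emph{every} finite non-empty $S$. Theorem \ref{thm2} then upgrades representability with any single prime $p>2|E(G)|$ to infinite representability with every $S$, providing enough flexibility to vary the prime $p$ and the representation $A$. Persistent failure of the path condition across varying primes should force $G$ to contain a structural obstruction in the style of the $K_{m,n}$ constraint exploited in Theorem \ref{thm3.7}: three or more distinct representations of a single integer as a difference of two $S$-units, which is impossible for suitable $S$; the non-representability of $G$ with that $S$ contradicts the hypothesis. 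Once both $c$-valuation axioms are secured, Havel--Mor\'avek yields an embedding $G\hookrightarrow Q_n$, completing the proof.
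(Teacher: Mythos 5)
Your forward direction is correct and takes a genuinely different route from the paper. You build the representation directly for an arbitrary non-empty $S$ by choosing ``generic'' $S$-units $u_1,\dots,u_n$ inductively, the finiteness of the bad set at each step coming from the two-term $S$-unit equation (Theorem A); the paper instead represents $G$ only with $S_0=\{p\}$ via the explicit values $\sum_i a_ip^{i}$ and then invokes Theorem \ref{thm2} to transfer the representation to every $S$. Your version is self-contained with respect to Theorem \ref{thm2}; the paper's gives a very concrete single-prime model. Both arguments use the same tacit strengthening of ``cubical'', namely that the embedding into $Q_n$ may be taken so that non-adjacent vertices of $G$ land at Hamming distance at least $2$, so you are no worse off than the paper there.

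The backward direction, however, has a genuine gap, and you have correctly located it yourself: labelling an edge by the exponent $e_{ij}$ alone does \emph{not} give a $c$-valuation, and the path condition for non-adjacent endpoints is exactly where it breaks. Concretely, $A=\{0,1,2\}$ represents the path $P_3$ with $S=\{p\}$ for any odd $p>4$; both edges carry the exponent $0$, so every label occurs an even number of times on the path joining the non-adjacent endpoints $0$ and $2$. Your proposed rescue (varying $p$ and the representation until an $S$-unit-equation obstruction appears) is a hope, not an argument, and nothing in it explains how to produce a representation whose coarse labelling is a $c$-valuation. The missing idea is a \emph{refinement} of the labels rather than a change of representation. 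From a representation with $S=\{p\}$, $p>k$ ($k$ the number of edges), the balanced-cycle argument you already have shows that every vertex value is well defined of the form $\sum_{i=1}^{r}a_ip^{m_i}$ with integer digits $|a_i|<p$; hence $G$ embeds into the grid $\Z^r$ with edges going to grid edges. One then embeds the box $[-p+1,p-1]^r$ into $Q_{2pr}$ by unary-encoding each coordinate; equivalently, one replaces the label $p^{m_i}$ of an edge by the pair $(i,a_i)$ recording also the position along the $i$-th axis (the paper's vectors $\vec{e}_{i,a_i}$). With these refined labels the path condition is automatic, since a path between vertices differing in the $i$-th grid coordinate must cross some level $j$ of that coordinate an odd number of times, and this is precisely the $c$-valuation the paper's closing remark alludes to. Without this refinement (or an equivalent device) your proof of the ``representable with all $S$ implies cubical'' implication does not go through.
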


Note that since forest graphs are cubical, the above theorem immediately implies Theorem \ref{thm3}. Further, the above result together with Theorem \ref{thm2} implies that a graph is cubical if and only if it can be represented with $S_0$ specified in Theorem \ref{thm2}.

\section{Proofs of the results stated in Section \ref{s2}}
\label{s-p1}

In the proofs below we shall work with finite subsets $A$ of $\Z$. Every $S$-equivalence class of ordered subsets $A$ from $\Z_S$ contains a subset consisting of integers. Such a subset can be obtained from $A$ by multiplying it by an appropriate element of $Z_S^*\cap \Z$. Hence for Theorems \ref{thm1}-\ref{thm4} it suffices to study the graphs $\G_S(A)$ with subsets $A$ having all the elements from $\Z$. In this case, $a,b\in A$ are connected by an edge if and only if $a-b\in \Z_S^*\cap \Z$.

\begin{proof}[Proof of Theorem \ref{thm1}]
Let $G$ be a fixed graph with $|G|=n$. Write $n':=\max\{n,3\}$ and
$$
S_0:=\{p\ \text{prime}\ :\ p<n'\}.
$$
We prove by induction on $k$ that for any graph $G_k$ with
$|G_k|=k\leq n$ there exists a finite set $S_k$ of primes
with $S_0 \subseteq S_k$  and a finite set $A_k\subseteq \Z$ with $|A_k|=k$ such that $\G_{S_k}(A_k)$ is isomorphic to $G_k$.

Let $k=1$. Then $G_1$ is a graph with one vertex (and without
edges). Taking any finite set of primes $S_1$ with $S_1\supseteq S_0$ and $A_1=\{0\}$, we are obviously done in this case.

Let now $G_k$ be a graph such that $|G_k|=k$ with $2\leq
k\leq n$. Write $G_k=\{v_1,\dots,v_k\}$, and $G_{k-1}=G_k\setminus
\{v_k\}$ (removing also the corresponding edges). By induction
we may assume that there exists a set $S_{k-1}$ of primes including $S_0$ as a subset and a set
$A_{k-1}=\{a_1,\dots,a_{k-1}\}$ of integers such that
$\G_{S_{k-1}}(A_{k-1})$ is isomorphic to $G_{k-1}$, by an isomorphism $\varphi:\ \G_{S_{k-1}}(A_{k-1})\to G_{k-1}$. Without loss of generality we may assume that $\varphi(a_i)=v_i$ $(i=1,\dots,k-1)$. Write $T''$ for the set of indices of those vertices of $G_{k-1}$ which are {\bf not} connected with $v_k$ by an edge in $G_{k}$. Further, put
$$
D:=\{d\ \text{prime}\ :\ d\notin S_{k-1},\ d\mid a-b\
\text{for some distinct}\ a,b\in A_{k-1}\}.
$$
For later use, observe that for all $d\in D$ we have $d\geq n'>k-1$.

If $T''\neq\emptyset$, write $T''=\{t_1,\dots,t_\ell\}$, and choose distinct primes $q_{t_1},\dots,q_{t_\ell}$ such that for all $t_j\in T''$ we have $q_{t_j}\notin S_{k-1}\cup D$.
Observe that by these choices, for any distinct
$i_1,i_2\in\{1,\dots,k-1\}$ we have $a_{i_1}\not\equiv
a_{i_2}\pmod{q_{t_j}}$. For each prime $d\in D$ choose an $x_d\in \Z$ such that for all $i=1,\dots,k-1$ we have
\begin{equation} \label{nr1}
a_i\not\equiv x_d\pmod{d}.
\end{equation}
Since $d>k-1$ for all $d\in D$, such $x_d$ exist. Consider now the following linear system of congruences:
\begin{equation} \label{nr2}
\begin{cases}
a\equiv x_d\pmod{d} & (d\in D),\\
a\equiv a_{t_j}\pmod{q_{t_j}} & (t_j\in T'').
\end{cases}
\end{equation}
If $T''=\emptyset$ then the second set of congruences is empty. By the Chinese Remainder Theorem, this system has infinitely many solutions $a$. Choose $a_k$ to be an arbitrary solution, and let $A_k=A_{k-1}\cup\{a_k\}$. Further, put $T'=\{1,\dots,k-1\}\setminus T''$ and set
$$
S_k=S_{k-1}\cup\{p\ \text{prime}\ :\ p\mid a_k-a_i\ \text{for some}\ i\in T'\}.
$$

We claim that by these choices the graph $\G_{S_k}(A_k)$ is
isomorphic to $G_k$. More precisely, an isomorphism is given by $\varphi^*:\ \G_{S_k}(A_k)\to G_k$ with $\varphi^*(a_i)=v_i$
$(i=1,\dots,k)$.

Let $i\in\{1,\dots,k-1\}$. If $i\in T'$ then on the one hand,
$v_i$ and $v_k$ are connected by an edge in $G_k$, and on the other hand, by the definition of $S_k$ we have that $a_i$ and $a_k$ are connected in $\G_{S_k}(A_k)$. Assume now that $i\in T''$. Then $v_i$ and $v_k$ are not connected in $G_k$. Writing $i=t_j$, in view of $q_{t_j}\notin S_{k-1}$ and $q_{t_j}\mid a_k-a_i$, we have that $q_{t_j}\notin S_k$. Indeed, otherwise $q_{t_j}\mid a_k-a_{i'}$ for some $i'\in T'$, whence $q_{t_j}\mid a_i-a_{i'}$ with distinct $i,i'\in\{1,\dots,k-1\}$. This means that $q_{t_j}\in S_{k-1}\cup D$, which contradicts its definition. Thus $q_{t_j}\mid a_k-a_i$ implies that $a_i$ and $a_k$ are not connected by an edge in $\G_{S_k}(A_k)$.

Finally, we need to check that for any $i,j\in\{1,\dots,k-1\}$, $a_i$ and $a_j$ are connected by an edge in $\G_{S_k}(A_k)$ if and only if they are connected by an edge in $\G_{S_{k-1}}(A_{k-1})$. If $a_i$ and $a_j$ are connected by an edge in $\G_{S_{k-1}}(A_{k-1})$ then by $S_{k-1}\subseteq S_k$, obviously they are connected by an edge in $\G_{S_k}(A_k)$. Assume now that $a_i$ and $a_j$ are not connected in $\G_{S_{k-1}}(A_{k-1})$. Then there is a prime $d\in D$ dividing $a_i-a_j$. Observe that, by \eqref{nr2} and \eqref{nr1}, $d\mid a_k-x_d$ and $d\nmid a_\ell-x_d$, whence $d\nmid a_k-a_\ell$ for $\ell=1,\dots,k-1$. This implies that $d\notin S_k$. Hence $a_i$ and $a_j$ are not connected by an edge in $\G_{S_k}(A_k)$ either.

The above argument by induction shows the existence of a set $S=S_n$ with the required property. The infinitude of such sets $S$ can be guaranteed in the following way. If $G$ has no edges, then the statement is obvious. Otherwise, we may assume that the degree of $v_n$ is positive. Choose an arbitrary prime $p$ outside $S_n$, which is different from all the primes appearing as a modulus in \eqref{nr2} on constructing $a_n$. Observe that after extending \eqref{nr2} with the congruence
$$
a\equiv a_i \pmod{p}
$$
with some $i\in T'$ in the $n$-th step, the new system is also solvable. Taking a solution $a_n^*$ of this system in place of $a_n$, we see that $p\in S=S_n^*\neq S_n$ for the set $S$ obtained in this way. Now we may choose another prime outside $S_n\cup S_n^*$, etc., and the theorem follows.
\end{proof}

\begin{proof}[Proof of Theorem \ref{thm4}] Write $A=\{a_1,\dots,a_n\}$.

To prove i) choose primes $q_1,\dots,q_n$ outside $S$. Consider the system of linear congruences
$$
x\equiv a_i\pmod{q_i}\ \ \ (i=1,\dots,n)
$$
in $x\in \Z$. By the Chinese Remainder Theorem, this system has infinitely many solutions. Let $a'\in \Z$ be a solution such that $a'\notin A$. Then obviously, $a'$ is an isolated vertex of the graph $\G_S(A')$ where $A'=A\cup \{a'\}$.

To prove ii), take an arbitrary $a\in A$. Write
$$
D:=\{\pm (a_i-a_j)\ :\ 1\leq i<j\leq n\},
$$
and let $u\in \Z_S^*\cap \Z$ such that $u\notin D$ and for any $w\in \Z_S^*\cap \Z$ we also have $u+w\notin D$. The existence of such a $u$ easily follows from the theory of $S$-unit equations. Namely, for any $d\in D$ the equation $u+v=d$ has only finitely many solutions in $u,v\in \Z_S^*\cap \Z$, see \cite{gy1979} or Theorem A. Avoiding all such elements $u,v$, in fact we can choose $u$ in infinitely many ways. Let $a'=a+u$. Then $a'\notin A$, and obviously $a'$ and $a$ are connected by an edge in the graph $\G_S(A')$ where $A'=A\cup \{a'\}$. Assume that $a'$ is also connected with some vertex $b\in A$ with $b\neq a$. Then $b-(a+u)=w\in \Z_S^*\cap \Z$. However, this yields $w+u=b-a$, whence $w+u\in D$, contradicting the choice of $u$. This shows that in the graph $\G_S(A')$ only the vertex $a'$ is connected by an edge with the vertex $a$.
\end{proof}

\begin{proof}[Proof of Theorem \ref{thm3}]
Let $G$ be the disjoint union of the tree graphs $T_1,\dots,T_k$. Starting from one vertex $a\in \Z$, using part ii) of Theorem \ref{thm4}, we can gradually build up a set $A_1\subseteq \Z$ such that $\G_S(A_1)$ is isomorphic to $T_1$. Then by part i) of Theorem \ref{thm4} we can adjoin an isolated vertex $a'\in \Z$ to this graph, and then build up a component $A_2\subseteq \Z$ (with $a'\in A_2$) such that $\G_S(A_2)$ is isomorphic to $T_2$. Following this procedure, we can clearly construct a set $A=A_1\cup A_2\cup \dots \cup A_k$ with the desired property.
\end{proof}

\begin{proof}[Proof of Theorem \ref{thmuj}]
If $G$ is connected, i.e. $G$ has only one component, then the statement is trivial. Suppose that the statement is true for graphs having at most $k$ components with $k\geq 1$, and let $G$ be a graph having $k+1$ components. Let $G'$ be a component of $G$, and put $G''=G\setminus G'$. Let $A'$ and $A''$ be subsets of $\Z$ such that $G'$ and $G''$ are isomorphic to $\G_S(A')$ and $\G_S(A'')$, respectively. Then, similarly as in the proof of Theorem \ref{thm4} ii), we can choose a $u\in \Z$ such that $A'+u$ and $A'$ are disjoint, and $\G_S(A'+u)$ and $\G_S(A'')$ have no vertices connecting these graphs by an edge. Hence the statement follows by induction.
\end{proof}

\section{Proofs of the results stated in Section \ref{s-b}}
\label{s-p2}

In the proof of Theorem \ref{prop3.6}, we shall use the following deep finiteness result. Consider the $S$-unit equation
\begin{equation}
\label{eq5.1}
ax+by=1\ \ \ \text{in}\ x,y\in \Z_S^*,
\end{equation}
where $a,b$ are non-zero elements of $\Q$.
\vskip 3mm

\noindent
{\bf Theorem A.} (Evertse \cite{ev}){\sl
The number of solutions of \eqref{eq5.1} is at most
\begin{equation}
\label{neweq5}
3\cdot 7^{2|S|+3}.
\end{equation}
}

Consider the generalization
\begin{equation}
\label{neweq7}
a_1x_1+\dots+a_nx_n=1\ \ \ \text{in}\ x_1,\dots,x_n\in \Z_S^*
\end{equation}
of equation \eqref{eq5.1}, where $a_1,\dots,a_n$ are non-zero elements of $\Q$. A solution $(x_1,\dots,x_n)$ of \eqref{neweq7} is called {\it non-degenerate} if
$$
\sum\limits_{i\in I} a_ix_i\neq 0\ \ \ \text{for each non-empty subset}\ I\ \text{of}\ \{1,\dots,n\}
$$
and {\it degenerate} otherwise. It is clear that for $n=2$ each solution is non-degenerate. Evertse, Schlickewei and Schmidt \cite{ess} gave an explicit upper bound for the number $N_n$ of non-degenerate solutions of \eqref{neweq7}. This has been improved by Amoroso and Viada \cite{av} to the following result.
\vskip 3mm

\noindent
{\bf Theorem B.} {\sl
\begin{equation}
\label{neweq8}
N_n\leq (8n)^{4n^4(n|S|+n +1)}.
\end{equation}}

\begin{proof}[Proof of Theorem \ref{prop3.6}]
i) Let $n$ be an integer $\geq 2$ and $G=C_{2n}$. Then $C_{2n}$ is infinitely representable with all $S$ according to the proof by induction of Theorem 3.1 of \cite{ruzs}. (The result follows also from Theorem \ref{thmcub}.)
\vskip 2mm

Let $G=K_{2,2}$, and let $S$ be arbitrary. Let $u$ be a fixed $S$-unit. By Theorem A there are infinitely many $S$-units $w$ such that in
$$
u=(u+w)-w=(u-w)+w,
$$
none of $u+w$ and $u-w$ is an $S$-unit. Further, it is easy to see that for such $w$ the ordered subsets $A_w=(0,u,w,u+w)$ are pairwise non-$S$-equivalent, and the graphs $\G_S(A_w)$ are bipartite and so isomorphic to $K_{2,2}$.
\vskip 2mm

\noindent
ii) Let $G=C_3$. Then every representation of $G$ with $S$ corresponds with a normalized equation $x+y=1 \ \text{in}\ x,y\in \Z_S^*$. By Theorem A the number of solutions of this equation is finite. Therefore $C_3$ is finitely representable with $S$.

\vskip 2mm

Let $G=C_5$. Let $A=\{a_1,\dots,a_5\}\subseteq \Z_S$
be such that $\G_S(A)$ is isomorphic to $G$. Without loss of generality we may assume that $$a_2-a_1,\ a_3-a_2,\ a_4-a_3,\ a_5-a_4,\ a_1-a_5\in \Z_S^*.$$ Write $u_1,\dots,u_5$ for these $S$-units, respectively. Then we have
\begin{equation}
\label{ujtrivi}
u_1+\dots+u_5=0.
\end{equation}
Suppose that there is a vanishing subsum in the left hand side of \eqref{ujtrivi}. We may suppose that we have such a subsum consisting of two terms. Since these terms cannot be consecutive ones, without loss of generality we may assume that $u_1+u_3=0$. Then, as one can easily check, we have that $a_4-a_1=a_3-a_2$ is an $S$-unit. However, then $a_1$ and $a_4$ are also connected by an edge in $\G_S(A)$, which means that this graph is not isomorphic to $G$. Hence we get that the left hand side of the equation \eqref{ujtrivi} has no vanishing subsums. By Theorem B the number of non-degenerate solutions of equation \eqref{ujtrivi} is finite. Therefore $C_5$ is finitely representable.

\vskip 2mm

Let $G=K_{m,n}$ with $m>n>1$ or $m=n\geq 3$. Then the assertion immediately follows from Theorem \ref{thm5.4}, since $\overline{K_{m,n}}$, the complement of $K_{m,n}$ has two components each of size $\geq 2$. We remark that Theorem \ref{prop3.6} ii) is not utilized in the proof of Theorem \ref{thm5.4}.
\end{proof}

\begin{proof} [Proof of Theorem \ref{thm3.7}]
Theorem \ref{thm3.7} is an immediate consequence of the following theorem.
\end{proof}

\noindent {\bf Theorem C.} (Gy\H{o}ry \cite{gy2008}) {\sl Let $A$ be an ordered $k$-term subset in $\Z_S$. If
$$
k>3\cdot 2^{16(|S|+2)}
$$
then $\overline{\G_S(A)}$ has at most two components, and one of them is of order at most $1$.}

\begin{proof} This is a special case of Theorem 2.3 of \cite{gy2008}.
\end{proof}

\section{Proofs of the results stated in Section \ref{s3}}
\label{s-p3}

\begin{proof}[Proof of Theorem \ref{thm3.1}] If $G$ has no edges, then it is infinitely representable with all $S$. If $G$ is not connected, but is representable with some $S$, then following the proof of Theorem \ref{thmuj} one can easily see that $G$ is infinitely representable with $S$. So assume that $G$ is connected, but not doubly connected. Then $G$ has a bridge, i.e. an edge $e$ such that $G-\{e\}$ is the union of two components, say $G_1$ and $G_2$. We may further suppose that $|G_2|\geq 2$.

Assume that $G$ is representable for some $S$. Let $A$ and $B$ be sets of $S$-integers corresponding to the vertices of the components $G_1$ and $G_2$, respectively. Write $A=\{a_0,a_1,\dots,a_n\}$ and $B=\{b_0,b_1,\dots,b_k\}$. Without loss of generality we may assume that $e$ connects the vertices corresponding to $a_0$ and $b_0$ in $G$.

We show that then $G$ is infinitely representable with $S$. For this consider the sets $A+a'$ where $a'$ is such that
$a'-a_0+b_0\in \Z_S^*$. Write $u:=a'-a_0+b_0$ and $w:=a'-a_i+b_j$ for some arbitrary $(i,j) \not= (0,0)$. Then
\begin{equation}
\label{eq111}
u-w=a_i-b_j-a_0+b_0.
\end{equation}
If the right hand side is zero, then $a_0-b_0=a_i-b_j$ $((i,j)\neq (0,0))$ would be valid. Since $e$ corresponding to $a_0-b_0$ is an edge of $G$, hence $a_0-b_0\in \Z_S^*$, we also would have $a_i-b_j\in \Z_S^*$ so that $a_i$ and $b_j$ would also be connected by an edge, contradicting the assumption that $a_0-b_0$ is a bridge between $A$ and $B$. Thus the right hand side of \eqref{eq111} is nonzero. Since \eqref{eq111} has only finitely many solutions in $u,w\in \Z_S^*$, there exist infinitely many $u\in \Z_S^*$ such that the corresponding $w$ is not in $\Z_S^*$, thus $a'-a_i+b_j \notin \Z_S^*$. Since it is true for all $(i,j) \neq(0,0)$, we obtain that for infinitely many $u\in \Z_S^*$ we have $a'-a_i+b_j \notin \Z_S^*$, for all $(i,j)\neq (0,0)$. Now choosing $a'$ accordingly, $(A+a') \cup B$ has the same induced graph as $A \cup B$.
\end{proof}

\begin{proof}[Proof of Theorem \ref{thm2}] We may assume that $G$ is connected, otherwise by Theorem \ref{thmuj} we may apply our argument to the components of $G$. Further, if $G$ is a tree then by Theorem \ref{thm3} it can be represented by any $S$ and by Theorem \ref{thm3.1} we are done. Thus, in particular, we may suppose that $G$ contains a cycle.

Let $p\in{\mathbb Z}$ be a prime larger than $n$, the number of edges of $G$, and suppose that $G$ is representable with $S_0=\{p\}$ and let $A_0\subseteq\Z_S$ be a finite set such that $G$ is isomorphic to the induced graph $\G_{S_0}(A_0)$. Note that, as before, we may assume that $A_0\subseteq \Z$.  Without loss of generality we may assume that $0\in A_0$. Label the edges of $G$ by the corresponding $S_0$-units, and write
$$
E_0=\{\pm u_1,\dots,\pm u_k\}
$$
for the set of occurring $S_0$-units. We assume here that $u_i>0$ $(i=1,\dots,k)$. Note that it may happen that some $u_i$ or $-u_i$ labels more edges.

Suppose that $k=1$. Considering any cycle of $G$ we see that both $u_1$ and $-u_1$ must occur as labels of some edges. However, then there must be two consecutive edges in that cycle with labels $u_1$ and $-u_1$ (or vice versa), which yields a contradiction. So we conclude that $k\geq 2$.

Take now any finite set $S$ of primes, and choose arbitrary $S$-units $w_1,\dots,w_k$ such that $2n|w_i|<|w_{i+1}|$ $(i=1,\dots,k-1)$. Recall that $n$ stands for the number of edges of $G$. Replace the labels $u_i$ by $w_i$ and $-u_i$ by $-w_i$ for all $i=1,\dots,k$ over the edges of $G$, and write $0$ for that vertex of $G$ which corresponds to $0\in A_0$ in the isomorphism $G\sim \G_{S_0}(A_0)$. Starting from this vertex $0$, attach values to the vertices of $G$ in the following way. Take an arbitrary walk from $0$ to a vertex $v$, and add the $S$-units over the labels on the path, to get the value of $v$. We show that the values of the vertices are well-defined. Let $v$ be any vertex of $G$, and let $e_1,\dots,e_t$ and and $e_1^*,\dots,e_\ell^*$ be two sequences of edges yielding walks from $0$ to $v$. If the values of $v$ obtained by using these walks are different, then the sum over the (appropriately directed) edges of the cycle $e_1,\dots,e_t,-e_\ell^*,\dots,-e_1^*$ does not vanish. This yields that for some $i\in\{1,\dots,k\}$ there are more edges with label $w_i$ than with $-w_i$ (or vice versa) in the cycle. However, then this is valid in the original labeling for $u_i$ and $-u_i$. This by $p>2n$ yields a contradiction.

Now we show that the values of the vertices are distinct. Suppose to the contrary that two such values coincide. This gives rise to an equality of the form
$$
c_{i_1}w_{i_1}+\dots+c_{i_t}w_{i_t}-
(c_{j_1}w_{j_1}+\dots+c_{j_\ell}w_{j_\ell})=0,
$$
where $w_{i_1},\dots,w_{i_t}$ are the edge labels along a path from $0$ to the one vertex, and $w_{j_1},\dots,w_{j_\ell}$ are the edge labels along a path from $0$ to the other vertex. Observe that $t,l \leq n$, and the coefficients are from $\{\pm1\}$. Since $2n|w_i|<|w_{i+1}|$ for all $i$, this is possible if and only if in the above equation after cancelations the coefficients of the $w_i$'s are all zero. However, the same identity holds for the $u_i$'s. This is a contradiction, since then we would have coinciding vertices in $A_0$.

Subsequently, we prove that the induced graph $\G_S(A)$ is isomorphic to $G$. Here $A$ is the set of $S$-integers defined in the natural way, by attaching to a vertex $v$ the sum of the values $w_i$ corresponding to the edges of a path from $0$ to $v$. Since it is obvious that $G\subseteq \G_S(A)$, we only need to check that $\G_S(A)$ does not contain more edges than $G$ does. This follows from the above proved fact that $\sum_{j=1}^t c_{i_j} w_{i_j} =0$ with $|c_{i_j}| < n$ implies $c_{i_1} = \dots = c_{i_t} = 0$ and therefore $\sum_{j=1}^t c_{i_j}u_{i_j} = 0$.
Indeed, this shows that if two vertices would be connected by an edge in $\G_S(A)$, then they also would be connected by an edge in $\G_{S_0}(A_0)$ as well, hence also in $G$.

Finally, the infinitude of representations follows from $k\geq 2$, as we have infinitely many choices for $w_k$.
\end{proof}

\begin{proof}[Proof of Corollary \ref{cor4.1}]
Let $k$ be the number of vertices of $G$. Let $p$ be a prime number $>k$. Put $S'= \{p\}$.
According to Theorem \ref{thm2}, $G$ is not representable with $S'$.
\end{proof}

\begin{proof}[Proof of Corollary \ref{cor4.2}]
Straightforward consequence of Theorem \ref{thm2}.
\end{proof}

\section{Proofs of the results stated in Section \ref{s-tri}}
\label{s-p4}

The following theorem is the main ingredient of the proof of Theorem \ref{thm5.4}. It was established in terms of the complements of the graphs $\G_S(A)$ which formulation is more useful for certain applications.

\vskip.4truecm

\noindent
{\bf Theorem D.}  (Gy\H{o}ry \cite{gy2008}) {\sl
Let $k\geq 3$ be an integer, and fix $S$. Then for all but at most
$$
\left((k+1)^42^{16(|S|+2)}\right)^{k-2}
$$
$S$-equivalence classes of ordered $k$-term subsets $A$ from $\Z_S$, one of the following cases holds:

\vskip.2truecm

\noindent
i) $\overline{\G_S(A)}$ is connected and at least one of $\G_S(A)$ and $\G_S(A)^\triangle$ is not connected;

\vskip.2truecm

\noindent
ii) $\overline{\G_S(A)}$ has exactly two components, $\overline{\G_1}$, and $\overline{\G_2}$, say, such that $|\overline{\G_1}|=1$, and $\G_2$ is not connected;

\vskip.2truecm

\noindent
iii) $\overline{\G_S(A)}$ has exactly two components of orders $\geq 2$.
}

\vskip.3truecm

\begin{proof}
This is an immediate consequence of a special case of Theorem 2.2 of \cite{gy2008}.
\end{proof}

\vskip.3truecm

\noindent {\bf Remark 1.} For earlier versions of Theorem D, we refer to \cite{gy1980,gy1982,gy1990,gy1992}. A less precise version in \cite{gy1980} is effective.

\vskip.3truecm

\noindent {\bf Remark 2.} We note that in Theorem D one could consider more generally so-called polygon hypergraphs $\G_S(A)^\circ$ in place of $\G_S(A)^\triangle$, where however, only those cycles $a_{i_1},\dots,a_{i_\ell}$ $(\ell\geq 3)$ are taken into consideration in $\G_S(A)^\circ$ for which
$$
\sum\limits_{j\in J} (a_{i_j}-a_{i_{j+1}})\neq 0\ \text{for each non-empty subset $J$ of $\{1,\dots,\ell\}$},
$$
see \cite{gy1990,gy2008}. Moreover, in this case the situation iii) cannot occur if $k\neq 4$ and one can also obtain an explicit upper bound for the number of exceptional $S$-equivalence classes. However, for abstract graphs this ``non-degeneracy" concept cannot be adapted. Hence we shall work here with $\triangle$-connectedness only.

The innovation in Theorem \ref{thm5.4} concerns part iii). For $k\geq 5$, the following lemma provides an upper bound for the number of cases in Theorem D iii).

\begin{lemma}
\label{lem5.1}
Let $k\geq 5$ be an integer, and let $S$ be fixed. There are at most
$$
\left(k\cdot 5^{648(3|S|+4)}\right)^{k-1}
$$
$S$-equivalence classes of ordered $k$-term subsets $A$ in $\Z_S$ for which $\overline{\G_S(A)}$ consists of two components, of which one has order $\geq 3$ and the other has order $\geq 2$.
\end{lemma}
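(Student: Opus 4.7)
The plan is to combine a normalization by $S$-equivalence with two $S$-unit equation inputs: Theorem A (2-variable) and Theorem B (3-variable, with $n=3$). First I would observe that each $S$-equivalence class of ordered tuples $A=(a_1,\dots,a_k)$ induces a unique unordered partition of indices $\{1,\dots,k\}=I_1\sqcup I_2$ with $|I_1|=k_1\geq 3$, $|I_2|=k_2\geq 2$, namely the index sets of the two components of $\overline{\G_S(A)}$; there are at most $2^k$ such partitions. Fixing one and setting $i_0=\min I_1$, $j_0=\min I_2$, I would exploit $S$-equivalence (translate by $-a_{i_0}$, then rescale by $(a_{j_0}-a_{i_0})^{-1}\in\Z_S^*$) to normalize $a_{i_0}=0$ and $a_{j_0}=1$. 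The defining complete-bipartite unit condition then reads $a_{i_l}-1\in\Z_S^*$, $a_{j_m}\in\Z_S^*$, and $a_{i_l}-a_{j_m}\in\Z_S^*$ for all $l,m\geq 1$. Write $C:=5^{648(3|S|+4)}$.

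The crux of the argument, and the main obstacle, is to bound the single value $a_{j_1}$ to finitely many possibilities. For each $l=1,\dots,k_1-1$ the triple $(u_l,v,w_l):=(a_{i_l}-1,\,a_{j_1},\,a_{i_l}-a_{j_1})\in(\Z_S^*)^3$ satisfies the 3-variable $S$-unit equation $-u_l+v+w_l=1$, for which Theorem B yields at most $24^{324(3|S|+4)}\leq C$ non-degenerate solutions (since $24<5^2$). The difficulty is the 1-parameter family of degenerate solutions, which I would dispose of by analyzing the three possible vanishing 2-term subsums of the left-hand side: $u_l=v$ forces $a_{i_l}=a_{j_1}+1$; $u_l=w_l$ forces $a_{j_1}=1=a_{j_0}$ (excluded); $v+w_l=0$ forces $a_{i_l}=0=a_{i_0}$ (excluded). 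Thus only $u_l=v$ is a genuinely degenerate case, and since the $a_{i_l}$ are pairwise distinct, it can hold for at most one index $l$. Because $k_1-1\geq 2$, at least one of the triples is non-degenerate, so its common coordinate $v$ places $a_{j_1}$ in a set of cardinality $\leq C$.

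Once $a_{j_1}$ is fixed, every remaining element is constrained by a 2-variable $S$-unit equation handled by Theorem A. For each $l=1,\dots,k_1-1$ the $S$-units $a_{i_l}-1$ and $a_{i_l}-a_{j_1}$ have the fixed nonzero difference $a_{j_1}-1$ (nonzero since $a_{j_1}\neq a_{j_0}=1$), giving at most $3\cdot 7^{2|S|+3}\leq C$ choices for $a_{i_l}$. Similarly, once $a_{i_1}\neq 0$ is fixed, Theorem A applied to $a_{j_l}$ and $a_{j_l}-a_{i_1}$ (with fixed nonzero difference $a_{i_1}$) gives at most $C$ choices for each of $a_{j_2},\dots,a_{j_{k_2-1}}$. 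Multiplying, each partition contributes at most $C^{1+(k_1-1)+(k_2-2)}=C^{k-2}$ normalized representatives, hence at most $C^{k-2}$ equivalence classes. Summing over the $\leq 2^k$ partitions and invoking the elementary inequality $2^k\leq k^{k-1}C$ (immediate for $k\geq 5$ since $k^{k-1}\geq 5^{k-1}\geq 2^k$) produces the required bound $\bigl(k\cdot 5^{648(3|S|+4)}\bigr)^{k-1}$.
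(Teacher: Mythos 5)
Your proof is correct, and it rests on the same two Diophantine inputs as the paper's argument (one application of Theorem B with $n=3$ to break the scaling freedom, then Theorem A applied repeatedly), but the route is organized genuinely differently. The paper first establishes an auxiliary statement (Lemma \ref{lem5.3}): a value $c$ admitting three or more representations $c=x+y$ with $x,y\in\Z_S^*$ is, up to an $S$-unit factor, one of at most $24^{324(3|S|+4)}$ numbers. It applies this to $c=a_{m+1}-a_{m+2}$, a difference inside the component of order $\geq 2$, which has $m\geq 3$ such representations through the other component; it then divides the whole tuple by the resulting unit factor, translates, and only afterwards runs Theorem A on the normalized differences. You instead remove the scaling and translation freedom at the outset by the two-point normalization $a_{i_0}=0$, $a_{j_0}=1$, which lets you apply Theorem B directly to the three-term equation $-(a_{i_l}-1)+a_{j_1}+(a_{i_l}-a_{j_1})=1$; your observation that the only surviving degenerate subsum forces $a_{i_l}=a_{j_1}+1$, hence can occur for at most one $l$ while $k_1-1\geq 2$ indices are available, replaces Lemma \ref{lem5.3} entirely. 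The two-point normalization also buys a cleaner combinatorial factor ($2^k$ choices of index partition versus the paper's $(k-4)\cdot k!$ orderings), though both fit under the stated bound. The individual steps all check out: the degeneracy analysis is exhaustive, the comparisons $24^{324(3|S|+4)}\leq 5^{648(3|S|+4)}$ and $2^k\leq k^{k-1}$ for $k\geq 5$ are valid, and the count of $C^{k-2}$ normalized representatives per partition yields the claimed total.
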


In the proof of Lemma \ref{lem5.1} we use the following result.

\begin{lemma}
\label{lem5.3}
Apart from an $S$-unit factor, there are at most
$$
24^{324(3|S|+4)}
$$
elements $c\in \Q^*$ such that
\begin{equation}
\label{neweq9}
x+y=c\ \ \ \text{in}\ x,y\in \Z_S^*
\end{equation}
has more than two solutions.
\end{lemma}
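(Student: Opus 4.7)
The plan is to reduce the statement to the Amoroso--Viada bound (Theorem B) applied with $n=3$. A direct computation shows that the promised bound is precisely $N_3 \leq (8\cdot 3)^{4\cdot 3^4\cdot(3|S|+3+1)}=24^{324(3|S|+4)}$, so the goal will be to attach to each $S$-unit class $[c]\in \Q^*/\Z_S^*$ for which $x+y=c$ has more than two solutions in $\Z_S^*$ a non-degenerate solution of the three-term $S$-unit equation $X+Y-Z=1$, in such a way that $[c]$ can be reconstructed from the triple.

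To carry this out, I would fix such a class, pick a representative $c$ and three distinct solutions $(x_1,y_1),(x_2,y_2),(x_3,y_3)\in(\Z_S^*)^2$, and then select an ordered pair of distinct non-swap solutions among them. This is always possible: the involution $(x,y)\mapsto(y,x)$ has orbits of size at most two, so among three solutions one can always find two, say $(x_1,y_1)$ and $(x_2,y_2)$, with $(x_2,y_2)\neq(y_1,x_1)$. Dividing the identity $x_1+y_1=x_2+y_2$ by $y_2$ then yields the $S$-unit triple
\begin{equation*}
(X,Y,Z)=\bigl(x_1/y_2,\,y_1/y_2,\,x_2/y_2\bigr),\qquad X+Y-Z=1.
\end{equation*}
I would then verify non-degeneracy: $X+Y=c/y_2\neq 0$ because $c\neq 0$, $X-Z=(x_1-x_2)/y_2\neq 0$ since the two chosen solutions are distinct, and $Y-Z=(y_1-x_2)/y_2\neq 0$ is precisely the non-swap condition.

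Finally, since $c=x_1+y_1=y_2(X+Y)=y_2(1+Z)$ with $y_2\in\Z_S^*$, we have $[c]=[1+Z]$ in $\Q^*/\Z_S^*$. Hence the rule $(X,Y,Z)\mapsto[1+Z]$ defines a map from the set of non-degenerate solutions of $X+Y-Z=1$ whose image contains every $[c]$ we wish to count, and Theorem B with $n=3$ bounds the domain by $24^{324(3|S|+4)}$. The only subtle point is the existence of the non-swap pair, which is precisely where the hypothesis of \emph{more than} two solutions (rather than just two) is used.
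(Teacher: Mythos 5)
Your argument is correct and takes essentially the same route as the paper's proof: both reduce to Theorem B with $n=3$ by selecting two solutions of $x+y=c$ that are neither equal nor swaps of one another, dividing $x_1+y_1=x_2+y_2$ by one of the units to obtain a non-degenerate solution of a three-term $S$-unit equation, and then recovering the class of $c$ modulo $\Z_S^*$ from that solution. Your explicit checks of non-degeneracy and of the existence of a non-swap pair merely spell out details the paper leaves implicit.
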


\noindent For the finiteness of the number of elements $c\in Q^*$ in Lemma \ref{lem5.3}, see Evertse, Gy\H{o}ry, Stewart and Tijdeman \cite{egst}.

\begin{proof}[Proof of Lemma \ref{lem5.3}]
Assume that there are at least three solutions. Then without loss of generality we may assume that $(x,y)$ and $(x',y')$ are solutions of \eqref{neweq9} such that $(x',y')\neq (x,y),(y,x)$. Since
$$
x+y=x'+y',
$$
it follows that $(x/y',y/y',-x'/y')$ is a non-degenerate solution of the equation
$$
x_1+x_2+x_3=1\ \ \ \text{in}\ x_1,x_2,x_3\in \Z_S^*.
$$
Then Theorem B implies that there are at most $N_3\leq 24^{324(3|S|+4)}$ possibilities for $(x/y',y/y')$ and hence for $c/y'$. This proves the assertion.
\end{proof}

\begin{proof}[Proof of Lemma \ref{lem5.1}]
Let $A=\{a_1,\dots,a_k\}$ be an ordered $k$-term subset from $\Z_S$ with $k\geq 5$ such that $\overline{\G_S(A)}$ has two components $\overline{\G_S(A_m)}$ and $\overline{\G_S(A_n)}$, where
$$
A_m=\{a_1,\dots,a_m\},\ A_n=\{a_{m+1},\dots,a_k\},\ m+n=k
$$
and $m\geq 3$, $n\geq 2$. Then
$$
a_i-a_j\in \Z_S^*\ \ \ \text{for}\ 1\leq i\leq m,\ m+1\leq j\leq k.
$$

Let
$$
C_1=24^{324(3|S|+4)},\ C_2=3\cdot 7^{2|S|+3}.
$$
We have
$$
a_{m+1}-a_{m+2}=(a_{m+1}-a_i)+(a_i-a_{m+2})\ \ \ \text{for}\ i=1,\dots,m.
$$
But $a_{m+1}-a_i,a_i-a_{m+2}\in \Z_S^*$ for each $i$ with $1\leq i\leq m$. Since by assumption $m\geq 3$, Lemma \ref{lem5.3} implies that
$$
a_{m+1}-a_{m+2}=u_{m+1,m+2}a_{m+1,m+2}
$$
where $a_{m+1,m+2}$ may take at most $C_1$ values and $u_{m+1,m+2}\in \Z_S^*$.

Put $a_i'=a_i/u_{m+1,m+2}$ for $i=1,\dots,k$, $A'=\{a_1',\dots,a_k'\}$, $A_m'=\{a_1',\dots,a_m'\}$, $A_n'=\{a_{m+1}',\dots,a_k'\}$ and fix the value of $a_{m+1,m+2}=a_{m+1}'-a_{m+2}'$. Then
$$
a_{m+1,m+2}=(a_{m+1}'-a_i') +(a_i'-a_{m+2}'),\ \ \ i=1,\dots,m,
$$
where $a_{m+1}'-a_i',a_i'-a_{m+2}'$ are $S$-units. By Theorem A there are at most $C_2$ such pairs of $S$-units. Taking differences, we infer that $a_i'-a_1'$ may take at most $C_2^2$ values for $i=2,\dots,m$. Then the number of possible tuples $a_2'-a_1',\dots,a_m'-a_1'$ is at most $(C_1C_2^2)^{m-1}$. But for fixed $a_2'-a_1'$ and for $m<\ell\leq k$ we have
$$
a_2'-a_1'=(a_2'-a_\ell')+(a_\ell'-a_1')
$$
where $a_2'-a_\ell'$, $a_\ell'-a_1'$ are $S$-units and they may take at most $C_2$ values.

Putting $A_0=A'-a_1'$, the number of possible ordered $k$-term subsets $A_0$ in $\Z_S$ is at most $(C_1\cdot C_2^2)^{k-1}$. Further, $A=uA_0+b$ with $u=u_{m+1,m+2}\in \Z_S^*$ and $b=a_1'u_{m+1,m+2}\in \Z_S$.

Finally, for a fixed ordering of the elements $a_1,\dots,a_k$, the integers $m,n$ can be chosen in at most $k-4$ ways. Further, the number of possible orderings of elements of $A$ is at most $k!$. Hence the total number of ordered $k$-term subsets $A$ does not exceed
$$
(k-4)k!(C_1C_2^2)^{k-1},
$$
whence, after some computation, the assertion follows.
\end{proof}

\begin{proof}[Proof of Theorem \ref{thm5.4}] Combine Theorem D and Lemma \ref{lem5.1}.
\end{proof}

\begin{proof}[Proof of Theorem \ref{thm3.8}] Let $G$ be a graph of order $\geq 3$ and $S$ a finite set of prime numbers. Suppose that $G$ is isomorphic to $G_S(A)$ for more than
$$
\left(k\cdot 5^{162(3|S|+4)}\right)^{4(k-1)}
$$
$S$-equivalence classes of ordered subsets $A$ from $\Z_S$. Theorem \ref{thm5.4} implies that for these subsets $A$, i), ii) or iii) holds. Observe that in cases ii) and iii) $\G_S(A)^\triangle$ is not connected. Because of the isomorphy of $\G_S(A)$ and $G$, the assertion immediately follows.
\end{proof}

\begin{proof}[Proof of Theorem \ref{thm3.3}] This is an immediate consequence of Theorem \ref{thm3.8}.
\end{proof}

\begin{proof}[Proof of Theorem \ref{thm3.4}]
Let $G$ be a graph of order $\geq 3$. Suppose that $G$ is representable with some $S$ and that $G$ and $\mH(G)$ are connected. If $G^\triangle$ is connected then the assertion follows from Theorem \ref{thm3.3}. Consider the case when $G^\triangle$ is not connected. By Theorem \ref{thm3.3} each $\triangle$-connected component of $G^\triangle$ is finitely representable. Further, we claim that if two such components are connected in $\mH(G)$ then the subgraph of $G$ spanned by these components is also finitely representable.

Indeed, let $\G_S(A)$ be a graph isomorphic to $G$ for some subset $A$ of $\Z_S$, and let $\G_S(B)$, $\G_S(B')$ be the induced subgraphs of $\G_S(A)$, isomorphic to the respective subgraphs of $G$ spanned by the two components under consideration. Then it follows that
$$
b-c =u r_{b,c}\ \ \ \text{and}\ \ \ b'-c'=w r'_{b',c'}
$$
for each distinct $b,c$ from $B$ and $b',c'$ from $B'$, where $u,w$ are $S$-units and $r_{b,c}$, $r'_{b',c'}$ can take only finitely many values from $\Z_S$. But by assumption $B$ and $B'$ have two common vertices, which implies that $w=ut$ for some $t\in \Z_S$ which may take only finitely many values. For each $b\in B$ and $b'\in B'$ we have
$$
b-b'=(b-c)+(c-b')
$$
where $c$ is a common vertex in $B$ and $B'$. This means that up to the factor $u$, $b-b'$ may take only finitely many values from $\Z_S$, whence our claim follows.

Finally, we can proceed by adding component after component in the same way, and the assertion follows by induction.
\end{proof}

\begin{proof}[Proof of Theorem \ref{thm3.6}] The theorem directly follows from Theorem C.
\end{proof}

\section{Proofs of the results stated in Section \ref{s-i}}
\label{s-p5}

\begin{proof}[Proof of Theorem \ref{thmcub}]

Suppose $G$ is cubical. Choose an integer $n$ such that $G$ can be embedded in $Q_n$.
Then the vertices of $G$ can be denoted by vectors $(a_1, \dots, a_n) \in \{0,1\}^n$ and two vertices are connected if and only if their difference is a unit vector $ \pm \vec{e}_i$ for some $i$. By Theorem \ref{thm2} it suffices to prove the statement for $S=\{p\}$, where $p$ is a prime larger than the number of edges of $G$. Assign to the vertex $(a_1, \dots, a_n)$ the value $\sum_{i=1}^n a_i p^{i}$.
If two vertices are adjacent in $Q_n$, then they differ by a unit vector. Hence their values differ by a power of $p$ which is in $U$ and therefore they are connected in $G$.
If two vertices are not adjacent in $Q_n$, then they differ by a vector $(b_1, \dots, b_n)$ with $b_i \in \{-1,0,1\}$ and at least two entries nonzero. Let $i_0$ be the smallest index with $b_{i_0}\not= 0$. Then their values differ by $p^{i_0}\sum_{i=i_0}^n b_i p^{i-i_0}$. Since $\sum_{i=i_0}^n b_i p^{i-i_0} $ is nonconstant and coprime to $p$, we have that $\sum_{i=1}^n b_i p^{i}$ is not in $U$. Thus $G$ is representable for $S=\{p\}$.

\vskip 2mm

Suppose $G$ is representable with all $S$. Without loss of generality we may assume that $G$ is connected. Otherwise we apply the argument below to each component of $G$.
Let $k$ be the number of edges of $G$. Let $S = \{p\}$ where $p$ is a prime greater than $k$.
Since $G$ is represented with $S$, we can adjoin the value 0 to one vertex of $G$ and then values to all other vertices are induced by adding the labels of the edges along a path from the origin to that vertex.
As we consider a representation of $G$ with $S$, the difference between the values of two vertices is a power of $p$ if and only if the vertices are adjacent.
By the choice of $p$, for every positive integer $m$ every cycle in $G$ contains as many edges with value $p^m$ as edges with value $-p^m$.
Suppose that $M= \{\pm p^{m_1},\pm p^{m_2},\dots,\pm p^{m_r} \}$ is the set of labels of the edges which occur.
Then the values of the vertices are of the form $a_1 p^{m_1} + a_2 p^{m_2} + \dots + a_r p^{m_r}$ where $a_1, \dots, a_r$ are integers with $|a_i| < p$ for $i=1, \dots, r$.
It follows that $G$ can be embedded in $\mathbb{Z}^r$ by mapping the vertex with value $a_1 p^{m_1} + a_2 p^{m_2} + \dots +a_r p^{m_r}$ to $(a_1, \dots, a_r)$.
All the vertices are in the hypercube $[-p+1,p-1]^n$.
Subsequently we introduce unit vectors $\vec{e}_{i,j}$ for $i=1, \dots, r$ and $j= -p, \dots, p-1$.
The edge connecting a vertex $(a_1, \dots, a_r)$ with a vertex $(a_1, \dots ,a_r) + \vec{e}_i$ gets the new value $\vec{e}_{i,a_i}$.
By doing so a path in the hypercube $\mathbb{Z}_r$ is mapped to a path in the hypercube $Q_{2pr}$.
It is still true that two vertices of $G$ are adjacent if and only if their difference in $Q_{2pr}$ is a unit vector.
Hence $G$ is cubical.
\end{proof}

\noindent {\bf Remark.} Note that in the above proof we have constructed a $c$-valuation in the sense of Havel and Moravek \cite{hm}.

\section{Acknowledgements}

We are grateful to Professor L. Lov\'asz and Dr. A. Mar\'oti for their useful remarks and for calling our attention to some references. Further, we thank the referees for their useful comments.

\end{document}